\newcolumntype{L}[1]{>{\raggedright\let\newline\\\arraybackslash\hspace{0pt}}m{#1}}
\newcolumntype{C}[1]{>{\centering\let\newline\\\arraybackslash\hspace{0pt}}m{#1}}
\newcolumntype{R}[1]{>{\raggedleft\let\newline\\\arraybackslash\hspace{0pt}}m{#1}}
\newtheorem{theorem}{Theorem}
\newtheorem{proposition}[theorem]{Proposition}
\theoremstyle{definition}
\newtheorem{example}[theorem]{Example}
\newtheorem{lemma}[theorem]{Lemma}
\newtheorem{remark}[theorem]{Remark}
\newtheorem{assumption}[theorem]{Assumption}
\newtheorem{scheme}[theorem]{Scheme}
\Crefname{assumption}{Assumption}{Assumptions}
\numberwithin{theorem}{section}
\numberwithin{proposition}{section}
\numberwithin{corollary}{section}
\numberwithin{definition}{section}
\numberwithin{example}{section}
\numberwithin{lemma}{section}
\numberwithin{remark}{section}
\numberwithin{assumption}{section}
\numberwithin{scheme}{section}
\numberwithin{conjecture}{section}
\numberwithin{equation}{section}
\numberwithin{table}{section}
\numberwithin{figure}{section}
\definecolor{myBlue}{RGB}{30,144,255} 
\definecolor{myGreen}{RGB}{69,169,0} 
\definecolor{myRed}{RGB}{165,12,42}
\definecolor{myOrange}{RGB}{225,92,22}
\definecolor{mycolor0}{rgb}{0.12156862745098,0.466666666666667,0.705882352941177} 
\definecolor{mycolor1}{rgb}{0.00000,0.44700,0.74100}
\definecolor{mycolor2}{rgb}{0.85000,0.32500,0.09800}
\definecolor{mycolor3}{rgb}{0.49400,0.18400,0.55600}
\definecolor{mycolor4}{rgb}{0.92900,0.69400,0.12500}
\definecolor{mycolor5}{rgb}{0.46600,0.67400,0.18800}
\definecolor{mycolor6}{rgb}{0.30100,0.74500,0.93300}
\definecolor{mycolor7}{rgb}{0.63500,0.07800,0.18400}
\def\N{\mathbb{N}}
\def\R{\mathbb{R}}
\newcommand{\RR}{\R}
\newcommand\ds{\,\mathrm{d}s}
\newcommand\dt{\,\mathrm{d}t}
\newcommand\dx{\,\mathrm{d}x}
\newcommand{\patz}[1]{\partial_{z_{#1}}}
\newcommand{\patc}[1]{\partial_{c^h_{#1}}}
\newcommand{\patzol}[1]{\partial_{\ol{z}_{#1}}}
\newcommand{\patzr}[1]{\partial_{\zr_{#1}}}
\newcommand{\derivative}{D}
\newcommand{\eps}{\ensuremath{\varepsilon}}
\newcommand{\Amat}{\mathbf{A}}
\newcommand{\Bmat}{\mathbf{B}}
\newcommand{\Gmat}{\mathbf{G}}
\newcommand{\Imat}{\mathbf{I}}
\newcommand{\Jmat}{\mathbf{J}}
\newcommand{\Kmat}{\mathbf{K}}
\newcommand{\Mmat}{\mathbf{M}}
\newcommand{\Pmat}{\mathbf{P}}
\newcommand{\Rmat}{\mathbf{R}}
\newcommand{\Vmat}{\mathbf{V}}
\newcommand{\Wmat}{\mathbf{W}}
\newcommand{\jbf}{\mathbf{j}}
\newcommand{\rbf}{\mathbf{r}}
\newcommand{\bbf}{\mathbf{g}}
\newcommand{\jbfr}{\tilde{\mathbf{j}}}
\newcommand{\rbfr}{\tilde{\mathbf{r}}}
\newcommand{\bbfr}{\tilde{\mathbf{g}}}
\newcommand{\yr}{\tilde{y}} 
\newcommand{\ol}{\overline}
\DeclareFontFamily{U}{matha}{\hyphenchar\font45}
\DeclareFontShape{U}{matha}{m}{n}{
	<-6> matha5 <6-7> matha6 <7-8> matha7
	<8-9> matha8 <9-10> matha9
	<10-12> matha10 <12-> matha12
}{}
\DeclareSymbolFont{matha}{U}{matha}{m}{n}
\DeclareFontFamily{U}{mathx}{\hyphenchar\font45}
\DeclareFontShape{U}{mathx}{m}{n}{
	<-6> mathx5 <6-7> mathx6 <7-8> mathx7
	<8-9> mathx8 <9-10> mathx9
	<10-12> mathx10 <12-> mathx12
}{}
\DeclareSymbolFont{mathx}{U}{mathx}{m}{n}
\DeclareMathDelimiter{\vvvert} {0}{matha}{"7E}{mathx}{"17}%
\newcommand{\norm}[1]{\lVert #1 \rVert}
\newcommand{\ham}{\mathcal{H}} 
\newcommand{\poly}{\mathbb{P}} 
\newcommand{\cpoly}{\mathsf{c}\poly} 
\newcommand{\ansatz}{\mathbb{V}} 
\newcommand{\test}{\mathbb{W}} 
\newcommand{\proj}{\Pi} 
\newcommand{\projmor}{\mathbf{\Pi}} 
\newcommand{\interp}{\mathcal{I}} 
\newcommand{\coeff}{\zeta} 
\newcommand{\tauh}{{\tau h}}
\newcommand{\tauhj}{{\tau h,j}} 
\renewcommand{\tilde}{\widetilde}
\DeclareMathOperator{\diag}{diag}
\newcommand{\eye}{\Imat} 
\newcommand{\nr}{\tilde{n}} 
\newcommand{\zr}{\tilde{z}} 
\newcommand{\vr}{\tilde{v}} 
\newcommand{\hamr}{\tilde{\mathcal{H}}} 
\newcommand{\differror}{e_{\mathrm{d}}}
\newcommand{\thetar}{\tilde{\theta}}
\newcommand{\quadnodes}{s_Q} 
\newcommand{\projnodes}{s_\Pi} 
\newcommand{\normal}{\mathbf{n}} 
\newcommand{\pat}{\partial_t} 
\renewcommand{\phi}{\varphi}
\newcommand{\error}{\mathcal{E}}
\newcommand{\errorenergy}{\error_{\text{energy}}}
\newcommand{\errorstate}{\error_{\text{state}}}
\newcommand{\errorstatenodal}{\error_{\text{state,nodal}}}
\newcommand{\zm}{\mathbf{z}}
\newcommand{\manuin}{\mathbf{m}}
\newcommand{\tp}{^{\hspace{-1pt}\mathsf{T}}}
\renewcommand{\vec}[1]{\begin{bmatrix} #1 \end{bmatrix}}
\newcommand{\textvec}[1]{[\begin{smallmatrix} #1 \end{smallmatrix}]}
\newcommand{\bigtextvec}[1]{\Big[\begin{smallmatrix} #1 \end{smallmatrix}\Big]}
\newcommand{\Bigtextvec}[1]{\bigg[\begin{smallmatrix} #1 \end{smallmatrix}\bigg]}
\newcommand{\btvec}{\bigtextvec}
\newcommand{\Btvec}{\Bigtextvec}
\newcommand{\blangle}{\big\langle}
\newcommand{\brangle}{\big\rangle}
\newcommand{\Blangle}{\Big\langle}
\newcommand{\Brangle}{\Big\rangle}
\newcommand{\embeds}{\hookrightarrow}
\newcommand{\boundedlinear}{\mathcal{L}}
\newcommand{\abs}[1]{\lvert #1 \rvert}
\newcommand{\lebesgue}{L} 
\newcommand{\pax}{\partial_x}
\newcommand{\dnpa}{\mathfrak{a}} 
\newcommand{\dnpb}{\mathfrak{b}} 
\newcommand{\dnptimescale}{\theta}
\newcommand{\Ac}{\Amat_{\mathrm{C}}}
\newcommand{\AL}{\Amat_{\mathrm{L}}}
\newcommand{\AR}{\Amat_{\mathrm{R}}}
\newcommand{\AS}{\Amat_{\mathrm{S}}}
\newcommand{\HC}{\ham_{\mathrm{C}}}
\newcommand{\HL}{\ham_{\mathrm{L}}}
\newcommand{\iS}{\imath_{\mathrm{S}}}
\newcommand{\psiL}{\psi_{\mathrm{L}}}
\newcommand{\qC}{q_{\mathrm{C}}}
\newcommand{\uS}{u_{\mathrm{S}}}
\newcommand{\banachspace}{\mathsf{B}}
\let\c@table\c@figure
\let\ftype@table\ftype@figure
\begin{document}

\title[Structure-preserving Discretization and MOR for Energy-Based Models]{Structure-preserving Discretization and Model Reduction for Energy-Based Models}
\author[]{R.~Altmann$^\dagger$, A.~Karsai$^\ddagger$, P.~Schulze$^\ddagger$}
\address{${}^{\dagger}$ Institute of Analysis and Numerics, Otto von Guericke University Magdeburg, Universit\"atsplatz 2, 39106 Magdeburg, Germany}
\address{${}^{\ddagger}$ Institute of Mathematics, Technische Universität Berlin, Str.~des 17. Juni~136, 10623 Berlin, Germany}
\email{robert.altmann@ovgu.de, karsai@math.tu-berlin.de, pschulze@math.tu-berlin.de}

\date{\today}
\keywords{}

\begin{abstract}
    We investigate discretization strategies for a recently introduced class of energy-based models.
    The model class encompasses classical port-Hamiltonian systems, generalized gradient flows, and certain systems with algebraic constraints.
    Our framework combines existing ideas from the literature and systematically addresses temporal discretization, spatial discretization, and model order reduction, ensuring that all resulting schemes are dissipation-preserving in the sense of a discrete dissipation inequality.
    For this, we use a Petrov--Galerkin ansatz together with appropriate projections.
    Numerical results for a nonlinear circuit model, the Cahn--Hilliard equation, and a doubly nonlinear parabolic equation illustrate the effectiveness of the approach.
\end{abstract}

\maketitle

{\tiny {\bf Key words.} energy-based modeling, dissipation, structure preservation, Petrov--Galerkin}\par
\noindent{\tiny {\bf AMS subject classifications.}  {\bf 37J06}, {\bf 65P10}, {\bf 65M60}}

\section{Introduction}
Ordinary and partial differential equations are used to model various physical processes.
To make these mathematical models more accurate, it may be beneficial to explicitly model certain conserved quantities, preventing the numerical approximations to become unphysical.
Incorporating such constraints as well as coupling conditions or constitutive relations leads to additional (algebraic) equations and, hence, to partial differential--algebraic equations, in general.

Exemplary model classes following this paradigm include gradient systems~\cite{HirS74,EggHS21}, the GENERIC framework~\cite{GrmO97}, port-Hamiltonian structures~\cite{Arn89,vanderschaft20-dirac}, and recently introduced adaptations of the latter~\cite{AltS25,GieKT25}.

Numerical simulations require temporal and possibly spatial discretization of the aforementioned model classes.
Unfortunately, classical discretization methods do not take the additional system properties into account, which can lead to physical inconsistencies within the simulation results.
A multitude of structure-preserving discretization and model reduction methods have been developed as a remedy.
The focus of this manuscript is to present a class of discretization methods that enable the qualitative conservation of an energy quantity.
In fact, we are interested in preserving an energy balance and the corresponding dissipation inequality.
We note that this is a specific design choice, especially for time discretization.
Another possibility would be to preserve the geometric structure of the energy flow, leading to symplectic methods, cf.~\cite{HaiWL06}.
Unfortunately, for the case of energy-conservative systems, exact preservation of the energy and symplecticity cannot be achieved at the same time by a numerical scheme with fixed step size~\cite{zhong88-lie}.
In particular, the scheme presented here is not generally symplectic.

To preserve an energy balance in discrete models, suitable approaches include Petrov--Galerkin methods in time~\cite{gross05-conservation,andrews25-enforcing} or space and time~\cite{jackaman19-finite,EggHS21,GieKT25,Mor24}.
For the spatial discretization, mixed finite element techniques have been developed~\cite{liljegren19-structure,serhani19-partitioned,egger23-asymptotic}, which tend to be tailored to the particular problem structure at hand, exploiting the relationship between the discretized variables.
For the temporal discretization, possible approaches include collocation and Runge--Kutta methods~\cite{hairer10-energy,kotyczka19-discrete,MehM19,Mor24}, operator splitting approaches~\cite{CelH17,FroGLM24}, and symplectic integrators as mentioned before.
Moreover, discrete gradient schemes, originally introduced in~\cite{harten83-upstream,Gon96,McLQR99}, were applied to port-Hamiltonian systems in~\cite{CelH17,macchelli23-control,Sch24,kinon25-discrete}.

Concerning model order reduction, Petrov--Galerkin methods are again suitable~\cite{ChaBG16}, including balancing approaches \cite{phillips02-guaranteed,PolS12,breiten22-error} and interpolatory techniques \cite{GugPBS12}.
Also spectral factorization methods~\cite{breiten22-passivity} and optimization approaches~\cite{schwerdtner23-sobmor} have been proposed.
Although we are focused on linear model reduction in this manuscript, we remark that there is increasing attention in nonlinear model reduction techniques as well~\cite{kawano18-structure,LeeC19,peherstorfer22-breaking,BucGHU24}.

Our main contribution is the development of a structure-preserving discretization and model reduction framework for the model class considered in~\cite{AltS25}.
Our approach combines ideas of~\cite{ChaBG16,EggHS21,andrews25-enforcing,GieKT25} and relies on a Petrov--Galerkin method in combination with suitable projections applied to the gradient of the energy functional.
To the best of our knowledge, this idea was first used in~\cite{ChaBG16}, where the authors assumed that this gradient is well-approximated by the test space used in the Petrov--Galerkin method.
The projection based formulation used here first appeared in~\cite{GieKT25}.
The models considered in this paper appeared in a similar form in~\cite{vanderschaft20-dirac} and can be viewed as a combination of the classes from~\cite{EggHS21,GieKT25} with additional algebraic constraints.
Unlike~\cite{andrews25-enforcing}, our model class and discretization scheme account for possible nonlinear effects on the temporal derivatives of the variables and is not restricted to problems with homogeneous or periodic boundary conditions.
On the other hand, we focus on the qualitative conservation of a single quantity.
Another contribution of this paper is the rigorous formulation of the class considered in~\cite{AltS25} for infinite-dimensional systems.
Since the class covers a wide range of physical phenomena, we do not provide existence and uniqueness results for the considered systems.
Furthermore, we show for a special case of our structure how spatial discretization via projection onto on a nonlinear manifold preserves the structure and we provide an error bound for the structure-preserving model reduction scheme for the special case of a semi-explicit system of index one.

The paper is structured as follows.
In \Cref{sec:model}, we formulate the model class of~\cite{AltS25} for infinite-dimensional systems and recall essential properties.
\Cref{sec:discretization} contains the main contribution of this work, a unified framework for the discretization in time and space as well as model reduction.
We first propose a fully discrete scheme and subsequently discuss its properties for the special cases of time continuous spatial discretization, model order reduction, and temporal discretization.
\Cref{sec:numerics} presents three numerical examples, including a nonlinear circuit model, the Cahn--Hilliard equation, and a doubly nonlinear parabolic equation.
Here, we discuss details of the implementation and study the numerical properties of our scheme.
Finally, we summarize our findings in \Cref{sec:conclusion} and point toward possibilities for future research.

\subsubsection*{Notation}
Throughout the paper, we write $(x,y) \coloneqq [x\tp, y\tp]\tp$ for two column vectors~$x,y$.
Moreover, we introduce the empty element~$\bullet$ in order to distinguish it from possible zero entries.
With this, we have, e.g., $(x, \bullet) = x$.
Besides, we use $\diag(\Amat_1,\Amat_2,\ldots,\Amat_k)$ to denote (block) diagonal matrices with diagonal (block) entries $\Amat_1,\ldots,\Amat_k$.
The dual space of the normed space~$X$ is denoted by~$X^*$ and the set of bounded linear operators from~$V$ to~$W$ is denoted by $\boundedlinear(V,W)$.

\section{Energy-based Model}\label{sec:model}

This section is devoted to the introduction of the energy-based framework which rigorously generalizes the one from~\cite{AltS25} to infinite-dimensional systems.
In~\cite{AltS25}, finite-dimensional systems of the form
\begin{subequations}\label{eq:model:altschulz-finite-dim}
    \begin{align}        \vec{
            \patz 1 \ham(z_1, z_2)
            \\
            \dot{z}_2
            \\
            0
        }
        & =
        (\Jmat - \Rmat)
        \vec{
            \dot{z}_1
            \\
            \patz 2 \ham(z_1, z_2)
            \\
            z_3
        }
        +
        \Bmat u,
        \qquad z(0) = z_0,
        \\
        y
        & =
        \Bmat\tp
        \vec{
            \dot{z}_1
            \\
            \patz 2 \ham(z_1, z_2)
            \\
            z_3
        },\end{align}\end{subequations}
with matrices~$\Jmat, \Rmat \in \RR^{n,n}$ with~$\Jmat = -\Jmat\tp$ and~$\Rmat = \Rmat\tp \succeq 0$ and~$\Bmat \in \RR^{n,m}$ are considered, where ``$\succeq$'' is the Loewner order.
Here, the state~$z \in \RR^n$ is split into three parts, i.e.,~$z = (z_1, z_2, z_3) \in \RR^{n_1} \times \RR^{n_2} \times \RR^{n_3}$,~$n_1 + n_2 + n_3 = n$, and the energy~$\ham \colon \RR^{n_1} \times \RR^{n_2} \to \RR$ is assumed to depend on~$z_1$ and~$z_2$ only.
The variable~$u$ denotes an external control input, the variable~$y$ describes a \emph{collocated} output, and $z_0 \in \RR^n$ is an initial condition to be specified.
For the sake of brevity, we suppress the time dependency of the variables~$z_1$,~$z_2$,~$z_3$, and~$u$.
We denote the derivative of~$\ham$ with respect to the first and second variables by~$\patz 1 \ham$ and~$\patz 2 \ham$, respectively.

Due to the hybrid structure as well as the existence of the $z_3$-variable, the framework~\eqref{eq:model:altschulz-finite-dim} is especially suited for differential--algebraic equations. This also includes examples of higher index

(see~\cite{Meh13} for an overview of different index concepts) as shown in~\cite{AltS25}. In particular, the index is not bounded by two as for classical port-Hamiltonian systems~\cite{MehMW18}.

Although the authors of~\cite{AltS25} do not explicitly consider the case of state dependent~$\Jmat$,~$\Rmat$, and~$\Bmat$, they emphasize that the model class can be easily extended to include these cases.
Here, we take a different approach to model nonlinearities in~$\Jmat$ and $\Rmat$.
Instead of using matrices, we replace~$\Jmat$,~$\Rmat$ and~$\Bmat$ by possibly nonlinear functions~$\jbf, \rbf \colon \RR^n \to \RR^n$ satisfying~$v\tp \jbf(v) = 0$ and~$v\tp \rbf(v) \geq 0$ for all~$v\in \RR^n$ as well as~$\bbf \colon \RR^n \to \RR^{n,m}$, where we allow $\bbf$ to be state-dependent.
This leads to the model
\begin{subequations}\label{eq:model-strong}
    \begin{align}        \vec{
            \patz 1 \ham(z_1, z_2)
            \\
            \dot{z}_2
            \\
            0
        }
        & =
        (\jbf - \rbf)
        \vec{
            \dot{z}_1
            \\
            \patz 2 \ham(z_1, z_2)
            \\
            z_3
        }
        +
        \bbf(z) u,
        \qquad z(0) = z_0,
        \\
        y
        & =
        \bbf(z)\tp
        \vec{
            \dot{z}_1
            \\
            \patz 2 \ham(z_1, z_2)
            \\
            z_3
        }.\end{align}\end{subequations}

As we will see later, structure-preserving discretizations of infinite-dimensional generalizations of~\eqref{eq:model-strong} lead to models of this form.

To generalize this class to the infinite-dimensional case, we make the following assumption.

\begin{assumption}\label{as:model:altschulz-assumption}
    Let~$X_1, X_2$ be reflexive Banach spaces and~$Z_1, Z_2, X_3 = Z_3$ be real Hilbert spaces identified with their dual such that there exist continuous and dense embeddings~$X_i \embeds Z_i$ for~$i=1,2$.
    Moreover, consider Banach spaces~$\tilde{X}_1$ and~$\tilde{X}_2$ with continuous and dense embeddings~$\tilde{X}_i \embeds X_i$,~$i=1,2$, and an open subset~$D \subseteq \tilde{X}_1 \times \tilde{X}_2$.
    Let~$\ham \colon D \to \RR$ be continuously Fréchet differentiable.
    Finally, let~$U$ be a normed space.
\end{assumption}

Due to the continuous and dense embeddings in \Cref{as:model:altschulz-assumption}, we consider two Gelfand triples $X_i, Z_i, X_i^*$ for $i=1,2$; see~\cite[Ch.~23.4]{Zei90a}.
To shorten the notation, we introduce
\begin{equation*}    X \coloneqq X_1^* \times X_2 \times X_3.\end{equation*}
In the model equations, we will restrict ourselves to states~$(z_1, z_2) \in X_1 \times X_2$ for which additionally~$\patz i \ham(z_1, z_2) \in X_i$ holds for~$i=1,2$, which is a smoothness assumption on $\ham$.
Let now~$\jbf, \rbf \colon X \to X^*$ be possibly nonlinear operators satisfying
\begin{equation}\label{eq:J-R-properties}
    \langle \jbf(v), v \rangle_{X^*,X} = 0
    ,\qquad
    \langle \rbf(v), v \rangle_{X^*,X} \geq 0\end{equation}
for all~$v \in X$.
Further, similar to the situation before, we consider a continuous function~$\bbf \colon X_1 \times X_2 \times X_3 \to \boundedlinear(U, X^*)$.
The generalization of~\eqref{eq:model:altschulz-finite-dim} now reads
\begin{subequations}\label{eq:model-weak}
    \begin{align}    \Blangle
    \Btvec{\patz 1 \ham(z_1,z_2) \\ \pat z_2 \\ 0}
    ,
    \phi
    \Brangle_{X^*, X}
    & =
    \Blangle
        (\jbf - \rbf) \Btvec{\pat z_1 \\ \patz 2 \ham(z_1,z_2) \\ z_3}
        ,
        \phi
    \Brangle_{X^*, X}
    +
    \blangle
        \bbf(z) u
        ,
        \phi
    \brangle_{X^*,X}
    ,\quad
    z(0) = z_0
    ,
    \\
    y
    & =
    \bbf(z)^* \Btvec{\pat z_1 \\ \patz 2 \ham(z_1,z_2) \\ z_3}\end{align}\end{subequations}
for all test functions~$\phi \in X$ and~$u(\cdot)\in U$.
As expected,~$\jbf$ and~$\rbf$ model conservative and dissipative effects, respectively, and~$\bbf$ handles external inputs to the model.

These inputs can be classical controls, but also boundary conditions that are necessary for the well-posedness of the model can be considered as inputs.
Although we do not consider explicit state dependencies of the operators~$\jbf$,~$\rbf$ and~$\bbf$, we remark that our scheme can be easily extended to these cases.

We complement~\eqref{eq:model-weak} with the initial condition

\[
    (z_1(0), z_2(0), z_3(0))
    = z_0
    = (z_{1,0}, z_{2,0}, z_{3,0}) \in X_1 \times X_2 \times X_3.
\]

Similar to~\eqref{eq:model:altschulz-finite-dim}, equation~\eqref{eq:model-weak} only contains the time derivatives of $z_1$ and $z_2$, where $\pat z_1$ occurs only implicitly in the argument of $\jbf - \rbf$.
Hence, the initial condition for $z_2(0)$ is free of choice, whereas $z_3(0)$ and possibly $z_1(0)$ need to satisfy consistency conditions.

\begin{example}
\label{exp:consistentIC}
Consider $\ham =  \frac12 z_1^2 + \frac12 z_2^2$ and $\jbf = 0$, $\rbf = \diag(0,0,1)$. Then $z_2(0)$ can be chosen freely but $z_1(0)$ and $z_3(0)$ are fixed by the input at time $t=0$.
\end{example}

Since different physical models can be written in the form~\eqref{eq:model-weak}, a detailed discussion of the existence of (unique) solutions is not in the scope of this article.
Instead, we assume that $u$ is given such that~\eqref{eq:model-weak} admits a unique solution on the time interval $[0,T]$ with $T > 0$.

Formulation~\eqref{eq:model-weak} covers a wide range of physical models.
This includes the quasilinear wave equation, 
the viscoelastic Stokes problem, the Cahn--Hilliard equation, poroelasticity, power network and circuit models, and constrained mechanical systems; see~\cite{AltS25,AltCPSS26}.
It also covers the $p$-Laplacian~\cite{GieKT25} and the equations of magneto-quasistatics~\cite{EggHS21}.

\subsection{Key properties}

One important property of solutions of~\eqref{eq:model-weak} are the following energy balance and dissipation inequality.

\begin{proposition}\label{prop:energy-balance}
    Sufficiently smooth solutions $z=(z_1,z_2,z_3)$ of~\eqref{eq:model-weak} satisfy
    \begin{align}        \ham(z_1(\theta_2), z_2(\theta_2)) - \ham(z_1(\theta_1), z_2(\theta_1))
        &= \int_{\theta_1}^{\theta_2}
        -\, \Big\langle
        \rbf \bigtextvec{\pat z_1 \\ \patz 2 \ham(z_1,z_2) \\ z_3}
        ,
        \bigtextvec{\pat z_1 \\ \patz 2 \ham(z_1,z_2) \\ z_3}
        \Big\rangle _{X^*, X} \label{eq:energy-balance}
        \\*
        &
        \hphantom{= \int_{\theta_1}^{\theta_2}}
        +
        \langle
            y, u
            \rangle _{U^*,U}
        \dt \notag
        \\
        &\leq
        \int_{\theta_1}^{\theta_2}
        \langle
            y, u
            \rangle _{U^*,U}
        \dt
        \label{eq:dissipation}\end{align}
    for all $0 \le \theta_1 \le \theta_2 \le T$ and $u\colon [0,T] \to U$.
\end{proposition}

\begin{proof}
Similarly as in~\cite{AltS25}, where the strong form~\eqref{eq:model-strong} was considered, the equality~\eqref{eq:energy-balance} is obtained as a direct consequence of the properties of $\jbf$.
The inequality~\eqref{eq:dissipation} follows from the properties of $\rbf$.
\end{proof}

Furthermore, the structure~\eqref{eq:model-weak} is preserved under power-conserving and dissipative interconnections if the state dependence in the input operator of both systems can be expressed in terms of $\ham'(z)$.

\subsection{Relation to other model classes}
Let us shortly discuss how model~\eqref{eq:model-weak} is related to other models from the literature.
Clearly, the following well-known model frameworks are included as special cases.

\begin{example}[port-Hamiltonian systems]
For $z_1 = z_3 = \bullet$, the class~\eqref{eq:model-weak} simplifies to the energy based model class discussed in~\cite{GieKT25}, and \emph{port-Hamiltonian} systems~\cite{Sch13} with state-independent structure and dissipation operators are recovered if $\jbf$ and $\rbf$ are linear; cf.~\cite{AltS25}.
The reader is referred to~\cite{karsai24-nonlinear} for a detailed discussion on the relationship of finite-dimensional port-Hamiltonian systems and the model class considered in~\cite{GieKT25}.
\end{example}

\begin{example}[gradient systems]
For $z_2 = z_3 = \bullet$, gradient systems with similarities to~\cite{EggHS21} are recovered.
In contrast to~\cite{EggHS21}, we allow nonlinear actions on $\pat z_1$, but do not consider $\jbf$ and $\rbf$ to be state-dependent.
\end{example}

The next two remarks illustrate that one of the variables $z_1$ and $z_2$ can be eliminated from model~\eqref{eq:model-weak}.
This procedure, however, comes at the cost of doubling the corresponding system dimensions.
After removing the $z_2$ variable, the resulting description is similar to the Dirac-dissipative formulation used in~\cite{Mor24}.

\begin{remark}\label{rem:removing-z2}
    To remove the variable~$z_2$ from the model~\eqref{eq:model-weak}, define $\ol{X} \coloneqq X_1^* \times X_2^* \times X_2 \times X_3$ and introduce the variables~$\ol{z}_1 \coloneqq (z_1, z_2) \in X_1 \times X_2$, $\eta_2\vcentcolon= \patz 2 \ham(z_1, z_2)\in X_2$, $\ol{z}_3 \coloneqq (\eta_2, z_3) \in X_2 \times X_3$ and $\ol{\ham}(\ol{z}_1) \coloneqq \ham(z_1, z_2)$.
    We can then write the system~\eqref{eq:model-weak} in the form
    \begin{align*}        (\patzol 1 \ol{\ham}(\ol{z}_1) )_1
        & =
        \patz 1 \ham(z_1, z_2)
        =
        (\jbf - \rbf)
        \Btvec{
            \pat z_1 \\ \eta_2 \\ z_3
            }_1
        + \big(\bbf(z) u\big)_1
        =
        (\jbf - \rbf)
        \Btvec{
            \pat (\ol{z}_1)_1 \\ (\ol{z}_3)_1 \\ (\ol{z}_3)_2
            }_1,
        + \big(\bbf(\ol{z}) u\big)_1,
        \\*
        (\patzol 1 \ol{\ham}(\ol{z}_1) )_2
        &
        =
        \patz 2 \ham(z_1, z_2)
        = \eta_2,
        \\
        0
        & =
        - \pat z_2 + (\jbf - \rbf)
        \Btvec{
            \pat z_1 \\ \eta_2 \\ z_3
        }_2 + \big(\bbf(\ol{z}) u\big)_2
        =
        - \pat (\ol{z}_1)_2
        + (\jbf - \rbf)
        \Btvec{
            \pat (\ol{z}_1)_1 \\ (\ol{z}_3)_1 \\ (\ol{z}_3)_2
        }_2 + \big(\bbf(\ol{z}) u\big)_2,
        \\*
        0
        & =
        (\jbf - \rbf) \Btvec{\pat z_1 \\ \eta_2 \\ z_3}_3
        + \big(\bbf(\ol{z}) u\big)_3
        =
        (\jbf - \rbf)
        \Btvec{
            \pat (\ol{z}_1)_1 \\ (\ol{z}_3)_1 \\ (\ol{z}_3)_2
        }_3
        + \big(\bbf(\ol{z}) u\big)_3,\end{align*}
    where we ignore test functions for the sake of brevity and with the index~$i$ denoting the $i$-th block.
    The above system can equivalently be written as
    \begin{align*}        \Blangle
        \btvec{
            \patzol 1 \ol{\ham}(\ol{z}_1)
            \\
            0
            },
            \phi
        \Brangle _{\ol{X}^*, \ol{X}}
        & =
        \Blangle
            (\ol{\jbf} - \ol{\rbf})
            \btvec{
                \pat \ol{z}_1
                \\
                \ol{z}_3
            }
            ,
            \phi
        \Brangle _{\ol{X}^*, \ol{X}}
        +
        \big\langle
            \ol{\bbf}(\ol{z}) u
            ,
            \phi
        \big\rangle _{\ol{X}^*, \ol{X}},\end{align*}
    where
    \begin{align*}        \ol{\jbf}
        &
        \colon
        \ol{X} \to \ol{X}^*
        , ~~
        \big((a,b),(c,d)\big)
        \mapsto
        \big(
            \jbf(a,c,d)_1, c, -b + \jbf(a,c,d)_2, \jbf(a,c,d)_3
        \big),
        \\
        \ol{\rbf}
        &
        \colon
        \ol{X} \to \ol{X}^*
        , ~~
        \big((a,b),(c,d)\big)
        \mapsto
        \big(
            \rbf(a,c,d)_1, 0, \rbf(a,c,d)_2, \rbf(a,c,d)_3
        \big),
        \\
        \ol{\bbf}
        &
        \colon
        \ol{X} \to \boundedlinear(U, \ol{X}^*)
        , ~~
        \big((a,b),(c,d)\big)
        \mapsto
        \Big(
            u
            \mapsto
            \big(
            (\bbf(a,b,d) u)_1, 0, (\bbf(a,b,d) u)_2, (\bbf(a,b,d) u)_3
            \big)
        \Big).\end{align*}
    Then~$\ol{\jbf}$ and~$\ol{\rbf}$ satisfy~\eqref{eq:J-R-properties}.
    Together with the output $\ol{y} = \ol{\bbf}(\ol{z})^* \textvec{ \pat \ol{z}_1 \\ \ol{z}_3} = \bbf(z)^* \btvec{ \pat z_1 \\ \patz 2 \ham(z_1, z_2) \\ z_3} = y$, we arrive at a system in the form~\eqref{eq:model-weak} but without the $z_2$ variable.
\end{remark}

\begin{remark}
One may also remove the variable~$z_1$ from the model~\eqref{eq:model-weak}. For this, one can follow an idea similarly presented in~\cite{Mor24} and consider the new variables $\ol{z}_2 \coloneqq (z_1, z_2) \in X_1 \times X_2$ and $\ol{z}_3 \coloneqq (\pat z_1, z_3) \in X_1^* \times X_3$.

\end{remark}

\section{Unified Framework for Discretization and Model Reduction}\label{sec:discretization}

Our discretization scheme uses a Petrov--Galerkin ansatz in time and space.
The scheme includes linear model reduction as a special case, see \Cref{subsec:model-reduction}.

\subsection{Trial and test spaces}

For the spatial discretization, let $X_i^h \subseteq X_i$, $i=1,2,3$, be arbitrary finite-dimensional subspaces.
These typically stem from an appropriate discretization using finite elements, but also other choices are possible.
Note that our scheme does not make any assumptions on the structure of the spaces $X_i^h$.
This is in contrast to application-tailored mixed finite element methods, which exploit additional system structures to recover properties like passivity~\cite{liljegren19-structure,serhani19-partitioned,egger23-asymptotic}.
Although mixed finite elements are suitable for our scheme as well, they are not required to obtain an energy balance and the corresponding dissipation inequality.

For the temporal discretization, we assume that the time horizon $[0,T]$ is decomposed at the time points $0 = t_0 < \dots < t_m = T$, $m\in \N$.
We denote the maximum step size by $\tau \coloneqq \max_{j=1,\dots,m} t_j - t_{j-1}$ and use piecewise polynomial spaces for the discretization in time.
Following~\cite{GieKT25}, for a Banach space $\banachspace$ and $k\in \N$, we set
\begin{align*}    \poly_k^\tau(\banachspace)
    & \coloneqq
    \big\{
        p \in L^\infty(0,T; \banachspace)
        ~|~
        p|_{[t_{j-1}, t_{j}]} \text{ is a polynomial of degree $k$}\\*
    &\hspace{4.5cm}   \text{	{with values in $\banachspace$} for all } j=1,\dots,m
    \big\}
    \\*
    \cpoly_k^\tau(\banachspace)
    & \coloneqq
    C(0,T; \banachspace) \cap \poly_k^\tau(\banachspace).\end{align*}
As ansatz spaces for the variables $z_1$, $z_2$, and $z_3$, we consider
\begin{equation*}	\ansatz_1^\tauh \coloneqq \cpoly_k^\tau(X_1^h),
	\qquad
	\ansatz_2^\tauh \coloneqq \cpoly_k^\tau(X_2^h),
	\qquad
	\ansatz_3^\tauh \coloneqq \poly_{k-1}^\tau(X_3^h).\end{equation*}
For the test spaces, we take
\begin{equation*}	\test_1^\tauh \coloneqq \poly_{k-1}^\tau(X_1^h),
	\qquad
	\test_2^\tauh \coloneqq \poly_{k-1}^\tau(X_2^h),
	\qquad
	\test_3^\tauh \coloneqq \poly_{k-1}^\tau(X_3^h)\end{equation*}
and highlight that $\test_i^\tauh = \pat \ansatz_i^\tauh$ for $i=1,2$.
For later use we set
\[
	\ansatz^\tauh \coloneqq \ansatz_1^\tauh \times \ansatz_2^\tauh \times\ansatz_3^\tauh
	\qquad\text{and}\qquad
	\test^\tauh \coloneqq \test_1^\tauh \times \test_2^\tauh \times \test_3^\tauh.
\]

\subsection{Projections}
In order to state a dissipation preserving scheme similar to~\cite{EggHS21,andrews25-enforcing,GieKT25}, we need to include appropriate projections in space and time.
We begin with the projection in space.

For $i=1,2,3$, let $\proj_i^h\colon Z_i \to X_i^h$ be the $Z_i$-orthogonal projection characterized by the property

\begin{equation}\label{eq:spatial-projection-property}
    \langle v, w\rangle _{Z_i}
    =
    \langle v, \proj_i^h w\rangle _{Z_i}
    \quad
    \text{for all $v \in X_i^h$}.\end{equation}

Since $X_i^h$ is finite-dimensional with $X_i^h \subseteq X_i \embeds Z_i$, it is closed with respect to the norm induced by the inner product of $Z_i$ and, thus, the projection~$\proj_i^h \colon X_i \to Z_i$ is well-defined and stable by standard arguments~\cite[Thm.~4.11]{rudin87-real}.
The stability of $\proj_i^h \colon X_i \to X_i$ can be established under additional assumptions on the space~$X_i^h$, see, e.g.,~\cite{ern21-finite1} and the references therein.

As usual but with slight abuse of notation, the projections naturally extend to operators $\proj_i^h\colon L^2(0,T;Z_i) \to L^2(0,T;X_i^h)$.
Moreover, the projection $\proj_i^h$ can be evaluated on~$L^2(0,T;X_i)$.
For later use, we introduce the combined projection

\[
    \proj^h
    \colon Z_1 \times Z_2 \times X_3 \to X_1^h \times X_2^h \times X_3^h,\qquad
    (z_1,z_2,z_3) \mapsto (\proj_1^h z_1, \proj_2^h z_2, \proj_3^h z_3).
\]

The $L^2$-orthogonal temporal projection $\proj_2^\tau\colon L^2(0,T; Z_2)\to \poly_{k-1}^\tau(Z_2)$ is characterized by the property
\begin{equation}\label{eq:temporal-projection-property}
    \int_{0}^{T}
        \langle
            v
            ,
            w
        \rangle _{Z_2}
    \dt
    =
    \int_{0}^{T}
        \langle
            v
            ,
            \proj_2^\tau w
        \rangle _{Z_2}
    \dt
    \quad
    \text{for all $v \in \poly_{k-1}^\tau(Z_2)$}\end{equation}

and $w \in L^2(0,T; Z_2)$.
Note that $\proj_2^\tau$ can be evaluated on $L^2(0,T;X_2)$ and maps $L^2(0,T;X_2) \to \poly_{k-1}^\tau(X_2)$, see~\cite[Lem.~2.1]{GieKT25} for corresponding stability estimates.

Before we state our discretization scheme, we emphasize a key property of the combined projection $\proj_2^\tau \proj_2^h$.%
\begin{lemma}[projection $\proj_2^\tau \proj_2^h$]\label{lem:projection-properties}
    Within the given setting, for any $v \in \ansatz_2^\tauh$ and $w \in L^2(0,T; X_2)$ it holds that
    \begin{equation}\label{eq:projection-property}
        \int_{0}^{T}
            \langle
                \pat v
                ,
                w
            \rangle _{X_2^*, X_2}
        \dt
        =
        \int_{0}^{T}
            \langle
                \pat v
                ,
                \proj_2^\tau \proj_2^h w
            \rangle _{X_2^*, X_2}
        \dt.\end{equation}

\end{lemma}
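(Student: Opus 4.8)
The plan is to prove \eqref{eq:projection-property} by a short chain of identities that moves the two projections $\proj_2^h$ and $\proj_2^\tau$ onto~$w$ one after the other. This relies on two structural facts. First, if $v \in \ansatz_2^\tauh = \cpoly_k^\tau(X_2^h)$, then $\pat v \in \poly_{k-1}^\tau(X_2^h)$; in particular $\pat v(t)$ lies in $X_2^h$ for a.e.\ $t$ and is therefore an admissible test element in both \eqref{eq:spatial-projection-property} and \eqref{eq:temporal-projection-property}. Second, since $X_2, Z_2, X_2'$ is a Gelfand triple, the duality pairing restricted to $Z_2 \times X_2$ coincides with the $Z_2$-inner product, so that $\langle \pat v(t), w(t)\rangle_{X_2', X_2} = \langle \pat v(t), w(t)\rangle_{Z_2}$ for a.e.\ $t$, using $\pat v(t) \in X_2^h \subseteq X_2 \embeds Z_2$.

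Carrying this out, I would first rewrite the left-hand side of \eqref{eq:projection-property} as $\int_0^T \langle \pat v, w\rangle_{Z_2}\dt$ via this identification. Next, applying the spatial projection property \eqref{eq:spatial-projection-property} pointwise in time with $v = \pat v(t) \in X_2^h$ gives $\langle \pat v(t), w(t)\rangle_{Z_2} = \langle \pat v(t), \proj_2^h w(t)\rangle_{Z_2}$ for a.e.\ $t$; integrating and using that $\proj_2^h$ extends to a map $L^2(0,T; X_2) \to L^2(0,T; X_2^h)$ yields $\int_0^T \langle \pat v, w\rangle_{Z_2}\dt = \int_0^T \langle \pat v, \proj_2^h w\rangle_{Z_2}\dt$. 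Since $\proj_2^h w \in L^2(0,T; X_2^h) \subseteq L^2(0,T; Z_2)$ and $\pat v \in \poly_{k-1}^\tau(X_2^h) \subseteq \poly_{k-1}^\tau(Z_2)$, the temporal projection property \eqref{eq:temporal-projection-property} applies with test function $\pat v$ and argument $\proj_2^h w$, giving $\int_0^T \langle \pat v, \proj_2^h w\rangle_{Z_2}\dt = \int_0^T \langle \pat v, \proj_2^\tau \proj_2^h w\rangle_{Z_2}\dt$. Finally, since $\proj_2^\tau \proj_2^h w$ again takes values in $X_2^h \subseteq X_2$, I would undo the Gelfand identification to return from $\langle\cdot,\cdot\rangle_{Z_2}$ to $\langle\cdot,\cdot\rangle_{X_2', X_2}$, arriving at the right-hand side of \eqref{eq:projection-property}. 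Chaining the four identities proves the claim.

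The only delicate point is the bookkeeping of which space each projected object lives in: one must check that $\proj_2^h$, originally defined on $Z_2$, indeed maps $L^2(0,T; X_2)$ into $L^2(0,T; X_2^h)$ and commutes with the time integral, and that $\proj_2^\tau$ maps $L^2(0,T; X_2)$ into $\poly_{k-1}^\tau(X_2)$ — so that both conversions between $\langle\cdot,\cdot\rangle_{X_2', X_2}$ and $\langle\cdot,\cdot\rangle_{Z_2}$ are legitimate. These facts are exactly what the remarks preceding the lemma record (together with the stability estimate in \cite[Lem.~2.1]{GieKT25}), so no genuine obstacle remains beyond the elementary three-step substitution above.
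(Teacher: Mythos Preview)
Your proposal is correct and follows essentially the same approach as the paper's proof: identify the duality pairing with the $Z_2$-inner product via the Gelfand triple, then invoke the defining properties \eqref{eq:spatial-projection-property} and \eqref{eq:temporal-projection-property} of the two projections. The paper compresses all of this into two sentences, whereas you spell out the intermediate steps and the bookkeeping of which space each object lives in, but the substance is identical.
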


\begin{proof}
    Note that $X_2 \embeds Z_2 \cong Z_2^* \embeds X_2^*$ implies $\langle \tilde{v}, \tilde{w} \rangle _{X_2^*, X_2} = \langle \tilde{v}, \tilde{w} \rangle _{Z_2}$ for all $(\tilde{v},\tilde{w}) \in Z_2 \times X_2$.
    The claim hence follows from~\eqref{eq:spatial-projection-property} and~\eqref{eq:temporal-projection-property}.

\end{proof}

Equivalently, we can replace the dual pairing $\langle\, \cdot\,, \cdot\,\rangle _{X_2^*, X_2}$ in~\eqref{eq:projection-property} by the inner product $\langle\, \cdot\,, \cdot\,\rangle _{Z_2}$.
We refrain from doing so as the projections will naturally appear in this context later.

\subsection{Numerical scheme}
We are now ready to state the unified discretization scheme.

\begin{scheme}[unified discretization framework]\label{scheme:full-discretization}
Find

\[
    z^\tauh
    = (z_{1}^\tauh, z_{2}^\tauh, z_{3}^\tauh)
    \in \ansatz^\tauh
\]

such that~$\patz i \ham(z_{1}^\tauh, z_{2}^\tauh) \in \lebesgue^2([0,T], X_i)$,~$i=1,2$, and
\begin{equation}\label{eq:model-discrete}
\begin{multlined}
    \int_{0}^{T}
    \Big\langle
        \Bigtextvec{\patz 1 \ham(z_{1}^\tauh,z_{2}^\tauh) \\ \pat z_{2}^\tauh \\ 0}
        ,
        \phi^\tauh
    \Big\rangle _{X^*, X}
    \dt
    \\
    \qquad=
    \int_{0}^{T}
    \Big\langle
        (\jbf - \rbf)
        \Bigtextvec{
            \pat z_{1}^\tauh
            \\
            \proj_2^\tau \proj_2^h\, \patz 2 \ham(z_{1}^\tauh, z_{2}^\tauh)
            \\
            z_{3}^\tauh
            }
        ,
        \phi^\tauh
    \Big\rangle _{X^*, X}
    +
    \big\langle
        \bbf(z^\tauh) u
        ,
        \phi^\tauh
    \big\rangle _{X^*,X}
    \dt
\end{multlined}\end{equation}
for all test functions $\phi^\tauh \in \test^\tauh$.
Moreover, we consider initial conditions for $z^\tauh(0)$, e.g., given by $z^\tauh(0) = \proj^h z_0$.
Note that the initial data may again satisfy a consistency condition as in the continuous setting; cf.~Example~\ref{exp:consistentIC}.
\end{scheme}

For details on when the terms in \Cref{scheme:full-discretization} may be evaluated, we refer to 
\cite{GieKT25,Kar26}.
We mention, however, that if~$z^\tauh$ satisfies~$\patz 1 \ham(z_{1}^\tauh, z_{2}^\tauh) \in \lebesgue^2([0,T], X_1)$ and~$\patz 2 \ham(z_{1}^\tauh, z_{2}^\tauh) \in \lebesgue^2([0,T], X_2)$, then by~\cite[Lem.~2.1]{GieKT25} the projection~$\proj_2^\tau \patz 2 \ham(z_{1}^\tauh, z_{2}^\tauh)$ is well-defined.

\begin{remark}
    Since the test spaces~$\test_i^\tauh$ contain discontinuous functions that are nonzero only in one of the intervals $[t_{j-1}, t_{j}]$, we can localize the integrals in~\eqref{eq:model-discrete}, leading to a time-stepping scheme; see~\cite{Kar26}.

	Note that one obtains $\dim(\test^\tauhj) = (n^h_1 + n^h_2 + n^h_3) \cdot k$ nonlinear equations on each time interval.

    For the existence and uniqueness of solutions to the resulting nonlinear system, we refer to the related approaches~\cite{andrews25-enforcing,ern21-finite3,holm18-continuous}, see also~\cite{Kar26} for a more detailed overview on available results.
\end{remark}

\begin{remark}
    If~$\jbf$ and~$\rbf$ are linear, then the projection~$\proj_2^\tau$ in~\eqref{eq:model-discrete} can be omitted, cf.~\cite{Kar26}.
    If additionally~$z_1$ and~$z_3$ are not present in~\eqref{eq:model-weak} and~$\bbf$ does not depend on the state, then \Cref{scheme:full-discretization} is equivalent to an implicit Runge--Kutta method and can be interpreted as a collocation method, see~\cite[Sect.~70.1.3 and 70.1.4]{ern21-finite3}.

	Recall that the original model may contain differential--algebraic equations such that a thoughtful choice of the time stepping method is needed. In particular, it may happen that solution components converge with different orders~\cite{HaiLR89}.
\end{remark}

\begin{remark}\label{rem:andrews-farrel-approach}
    As mentioned before, in contrast to the related work~\cite{andrews25-enforcing}, \Cref{scheme:full-discretization} accounts for possible nonlinear effects on~$\pat z_1$.
    However, if such nonlinear actions are not present in the model and a reformulation along the lines of \Cref{rem:removing-z2} is peformed, then the use of \Cref{scheme:full-discretization} yields the discretization approach from~\cite{andrews25-enforcing}.
    In this case, our method can be viewed as a special case.
\end{remark}

In the following, we show that discrete solutions coming from ~\Cref{scheme:full-discretization} inherit the energy balance and dissipation inequality from \Cref{prop:energy-balance}.
As mentioned before, since the test functions $\phi^\tauh \in \test^\tauh$ are piecewise continuous, the energy balance and dissipation inequality hold on all discretization intervals $[t_{j-1},t_j]$, $j=1,\dots,m$.

\begin{proposition}\label{prop:energy-balance-discrete}
Let $z^\tauh = (z_1^\tauh, z_2^\tauh, z_3^\tauh)$ denote the solution of \Cref{scheme:full-discretization}.
Then for $j=1,\dots,m$ we have
\begin{align}	\ham(z_1^\tauh(t_{j}), & z_2^\tauh(t_{j})) - \ham(z_1^\tauh(t_{j-1}), z_2^\tauh(t_{j-1})) \nonumber
    \\
    &
    \begin{aligned}
    & = \int_{t_{j-1}}^{t_{j}}
    -\,
    \Big\langle
        \rbf
        \Bigtextvec{
            \pat z_{1}^\tauh
            \\
            \proj_2^\tau \proj_2^h \patz 2 \ham(z_{1}^\tauh, z_{2}^\tauh)
            \\
            z_{3}^\tauh
        }
        ,
        \Bigtextvec{
            \pat z_{1}^\tauh
            \\
            \proj_2^\tau \proj_2^h \patz 2 \ham(z_{1}^\tauh, z_{2}^\tauh)
            \\
            z_{3}^\tauh
        }
    \Big\rangle _{X^*, X}
    +
    \langle
        y^\tauh
        ,
        u
    \rangle _{U^*,U}
    \dt
    \end{aligned}
    \label{eq:energy-balance-discrete}
    \\
    & \leq
    \int_{t_{j-1}}^{t_{j}}
    \langle
        y^\tauh
        ,
        u
    \rangle _{U^*,U}
    \dt,
    \label{eq:dissipation-discrete}\end{align}
where the discrete output~$y^\tauh$ is defined as~$y^\tauh \coloneqq \bbf(z^\tauh)^* \Btvec{\pat z_{1}^\tauh \\ \proj_2^\tau \proj_2^h \patz 2 \ham(z_{1}^\tauh, z_{2}^\tauh) \\ z_{3}^\tauh}$.

\end{proposition}

\begin{proof}
    Following \Cref{lem:projection-properties} and using the fact that $\test_2^\tauh$ contains discontinuous functions, we can localize~\eqref{eq:projection-property}

    and obtain
    \begin{align*}    	\ham(z_1^\tauh(t_{j}), z_2^\tauh(t_{j})) - \ham&(z_1^\tauh(t_{j-1}), z_2^\tauh(t_{j-1}))
        \\*
    &= \int_{t_{j-1}}^{t_{j}}
        \Big\langle
            \Bigtextvec{
                \patz 1 \ham(z_{1}^\tauh,z_{2}^\tauh)
                \\
                \pat z_{2}^\tauh
                \\
                0
            }
            ,
            \Bigtextvec{
                \pat z_{1}^\tauh
                \\
                \patz 2 \ham(z_{1}^\tauh, z_{2}^\tauh)
                \\
                z_{3}^\tauh
            }
        \Big\rangle _{X^*, X}
        \dt
    \\
    &= \int_{t_{j-1}}^{t_{j}}
        \Big\langle
            \Bigtextvec{
                \patz 1 \ham(z_{1}^\tauh,z_{2}^\tauh)
                \\
                \pat z_{2}^\tauh
                \\
                0
            }
            ,
            \Bigtextvec{
                \pat z_{1}^\tauh
                \\
                \proj_2^\tau \proj_2^h \patz 2 \ham(z_{1}^\tauh, z_{2}^\tauh)
                \\
                z_{3}^\tauh
            }
        \Big\rangle _{X^*, X}
        \dt.\end{align*}

    Now, using $\pat z_{1}^\tauh \in \pat \ansatz_1^\tauh = \test_1^\tauh$, $\proj_2^\tau \proj_2^h \patz 2 \ham(z_{1}^\tauh, z_{2}^\tauh) \in \pat \ansatz_2^\tauh = \test_2^\tauh$ and applying~\eqref{eq:model-discrete}, we obtain~\eqref{eq:energy-balance-discrete}.

    Inequality~\eqref{eq:dissipation-discrete} follows from property~\eqref{eq:J-R-properties} of $\rbf$.
\end{proof}

In order to obtain the time-discrete energy balance, we do not need to include projections $\proj_1^\tau \proj_1^h$ in front of $\patz 1 \ham(z_1^\tauh, z_2^\tauh)$ in~\eqref{eq:model-discrete}.
This was also noted in~\cite{EggHS21} and is caused by the fact that the projections are implicitly applied by the first component of the test function~$\phi^\tauh$.

Although we suggest to use piecewise polynomials in time, our scheme can be generalized to other approximations as outlined in the following remark.

\begin{remark}\label{rem:extension-more-general-ansatz-and-test-spaces}
    To obtain the discrete energy balance~\eqref{eq:energy-balance-discrete}, it is crucial to include the projection onto the space $\pat \ansatz_2^\tauh$ in front of $\patz 2 \ham(z_1^\tauh, z_2^\tauh)$.
    This motivates the choice $\test_2^\tauh = \pat \ansatz_2^\tauh$.
    Similarly, choosing $\test_1^\tauh = \pat \ansatz_1^\tauh$ is natural.
    If $\ansatz_i^\tauh$ are chosen as piecewise polynomial spaces as outlined above, the corresponding discretization scheme is well-defined.
    For more general temporal approximations, multiple difficulties arise.
    First, the projection $\proj_2^\tau \proj_2^h$ is generally only well-defined if $\pat \ansatz_2^\tauh$ is a closed subspace of $L^2(0,T; Z_2)$, see, e.g.,~\cite[Thm.~12.4 and Thm.~12.14]{rudin91-functional}.
    Second, the choices $\test_i^\tauh = \pat \ansatz_i^\tauh$, $i=1,2$, do not generally imply 
   	that the resulting nonlinear systems are square, which can lead to difficulties regarding existence and uniqueness of solutions.
    Third, identity~\eqref{eq:projection-property} can no longer be localized in time so that \Cref{prop:energy-balance-discrete} only holds between the boundary points $0$ and $T$.
    Nevertheless, other ansatz spaces may be suitable. For instance, enforcing continuity of $z_3$ may be beneficial.

    Other examples are trigonometric and exponential ansatz functions~\cite{yuan06-discontinuous}.
\end{remark}

To illustrate the implications of the previously presented discretization scheme, we continue with a detailed discussion of the spatial discretization and model reduction as special cases of \Cref{scheme:full-discretization}. We also comment on aspects for practically realizing the time stepping.
For notational reasons, we focus on the time-continuous formulation and drop the time dependence of the test functions for the former two.
This replaces model~\eqref{eq:model-discrete} by
\begin{equation}\label{eq:time-continuous-spatially-discrete}
    \Big\langle
        \Bigtextvec{\patz 1 \ham(z_{1}^h,z_{2}^h) \\ \pat z_{2}^h \\ 0}
        ,
        \phi^h
    \Big\rangle _{X^*, X}
    =
    \Big\langle
        (\jbf - \rbf)
        \Bigtextvec{
            \pat z_{1}^h
            \\
            \proj_2^h \patz 2 \ham(z_{1}^h, z_{2}^h)
            \\
            z_{3}^h
            }
        ,
        \phi^h
    \Big\rangle _{X^*, X}
    +
    \big\langle
        \bbf(z^h) u
        ,
        \phi^h
    \big\rangle _{X^*,X}\end{equation}
for all test functions $\phi^h \in X_1^h \times X_2^h \times X_3^h$. Note that we have dropped the superscript~$\tau$ to indicate continuity in time.
As in \Cref{prop:energy-balance-discrete}, solutions of~\eqref{eq:time-continuous-spatially-discrete} satisfy a dissipation inequality.
In fact, we can replace the times $t_{j-1}$ and $t_{j}$ in~\eqref{eq:dissipation-discrete} by arbitrary time points $0\le s < t \le T$.
Although spatially discrete and reduced order models share many similarities, in what follows we discuss them separately.

\subsection{Space discretization}\label{subsec:space}
As dimensions for the spatially discrete models, we introduce
\[
    n_i^h \coloneqq \dim(X_i^h)\in \N, \
    i=1,2,3
    \qquad\text{and}\qquad
    n^h \coloneqq n_1^h + n_2^h + n_3^h.
\]

As an alternative to~\eqref{eq:time-continuous-spatially-discrete}, we can state the dynamics for the corresponding coefficient vectors $c_i^h \in \R^{n_i^h}$ of $z_i^h$, $i=1,2,3$.
For this, we fix bases of $X_i^h$, $i=1,2,3$, and set $c^h \coloneqq (c_1^h, c_2^h, c_3^h) \in \R^{n^h}$.
We denote the canonical interpolation operator (or any suitable generalization of it) mapping a coefficient vector to the corresponding element in $X_i^h$ by
\[
    \interp_i^h \colon \R^{n_{i,h}} \to X_i^h,\qquad
    i=1,2,3
\]

and set $\interp_{12}^h\colon \R^{n_1^h} \times \R^{n_2^h} \to X_1^h \times X_2^h,~ (c_1^h, c_2^h) \mapsto \big(\interp_1^h c_1^h, \interp_2^h c_2^h\big)$.
Similarly, we define $\interp^h \colon \R^{n^h} \to X_1^h \times X_2^h \times X_3^h$ by
\[
    (c_1^h, c_2^h, c_3^h)
    \mapsto \big(\interp_1^h c_1^h, \interp_2^h c_2^h, \interp_3^h c_3^h \big).
\]
Conversely, the functions $\coeff_i^h \colon X_i^h \to \R^{n_i^h}$ map elements of $X_i^h$ to their coefficient vectors with respect to the chosen basis.
We can now define the space-discrete energy $\ham^h\colon \R^{n_1^h} \times \R^{n_2^h} \to \R$ by
\begin{equation*}    \ham^h(c_1^h, c_2^h)
    \coloneqq \ham\big(\interp_1^h c_1^h, \interp_2^h c_2^h\big).\end{equation*}
Since the interpolation operators $\interp_1^h$ and $\interp_2^h$ are linear, using~\eqref{eq:spatial-projection-property} for $c_{12}^h = (c_1^h, c_2^h) \in \R^{n_1^h}\times\R^{n_2^h}$ with $\patz i \ham(\interp_{12}^h c_{12}^h) \in X_i$ for $i=1,2$ and arbitrary $d_{12}^h = (d_1^h, d_2^h) \in \R^{n_1^h}\times\R^{n_2^h}$ we obtain

\begin{equation*}
\begin{aligned}
    \nabla \ham^h(c_{12}^h)\tp d_{12}^h
    &
    =
    \frac{\mathrm{d}}{\mathrm{d}s} \ham^h(c_{12}^h + s d_{12}^h)\Big|_{s=0}
    \\
    &
    =
    \frac{\mathrm{d}}{\mathrm{d}s} \ham(\interp_{12}^h c_{12}^h + s\, \interp_{12}^h d_{12}^h)\Big|_{s=0}
    \\
    &
    =
    \Big\langle
        \patz 1 \ham(\interp_{12}^h c_{12}^h)
        ,
        \interp_{1}^h d_{1}^h
    \Big\rangle _{Z_1}
    +
    \Big\langle
        \patz 2 \ham(\interp_{12}^h c_{12}^h)
        ,
        \interp_{2}^h d_{2}^h
    \Big\rangle _{Z_2}
    \\
    &
    =
    \Big\langle
        \proj_1^h\patz 1 \ham(\interp_{12}^h c_{12}^h)
        ,
        \interp_{1}^h d_{1}^h
    \Big\rangle _{Z_1}
    +
    \Big\langle
        \proj_2^h\patz 2 \ham(\interp_{12}^h c_{12}^h)
        ,
        \interp_{2}^h d_{2}^h
    \Big\rangle _{Z_2}
    \\
    &
    =
    \Big(\Mmat_1^h \zeta_1^h\big(\proj_1^h\patz 1 \ham(\interp_{12}^h c_{12}^h)\big)\Big)\tp d_1^h
    +
    \Big(\Mmat_2^h \zeta_2^h\big(\proj_2^h\patz 2 \ham(\interp_{12}^h c_{12}^h)\big)\Big)\tp d_2^h
    \\
    &
    =
    \begin{bmatrix}        \Mmat_1^h \zeta_1^h\big(\proj_1^h\patz 1 \ham(\interp_{12}^h c_{12}^h)\big)
        \\
        \Mmat_2^h \zeta_2^h\big(\proj_2^h\patz 2 \ham(\interp_{12}^h c_{12}^h)\big)\end{bmatrix}\tp d_{12}^h,
\end{aligned}\end{equation*}

where $\Mmat_1^h$ and $\Mmat_2^h$ are the mass matrices corresponding to the bases of $X_1$ and $X_2$, respectively.
Hence, we have

\begin{equation*}    \nabla \ham^h(c_1^h, c_2^h)
    =
    \begin{bmatrix}        \Mmat_1^h \zeta_1^h\big(\proj_1^h\patz 1 \ham(\interp_{1}^h c_{1}^h,\interp_{2}^h c_{2}^h)\big)
        \\
        \Mmat_2^h \zeta_2^h\big(\proj_2^h\patz 2 \ham(\interp_{1}^h c_{1}^h,\interp_{2}^h c_{2}^h)\big)\end{bmatrix}.\end{equation*}

Accordingly, we set $\patc i \ham^h(c_{12}^h) \coloneqq \Mmat_i^h \zeta_i^h(\proj_i^h\patz i \ham(\interp_{12}^h c_{12}^h))$, $i=1,2$.
Now, define $\bar{\jbf}^h, \bar{\rbf}^h \colon \R^{n^h} \to \R^{n^h}$ and~$\bar{\bbf}^h \colon \RR^{n^h} \to \boundedlinear(U, \R^{n^h})$ via
\begin{gather*}    \bar{\jbf}^h v^h
    \coloneqq
    \vec{
        \big\langle \jbf (\interp^h v^h), \interp^h e_j \big\rangle _{X^*, X}
    }_{j=1,\dots,n^h}
    ,\quad
    \bar{\rbf}^h v^h
    \coloneqq
    \vec{
        \big\langle \rbf (\interp^h v^h), \interp^h e_j \big\rangle _{X^*, X}
    }_{j=1,\dots,n^h}
    ,
    \\
    \bar{\bbf}^h (c^h) u
    \coloneqq
    \vec{
        \big\langle \bbf (\interp^h c^h) u, \interp^h e_j \big\rangle _{X^*, X}
    }_{j=1,\dots,n^h}\end{gather*}
for all $v^h \in \R^{n^h}$ and $u\in U$, where $e_j \in \R^{n^h}$ denotes the $j$-th unit vector.
Note that~\eqref{eq:J-R-properties} implies $(v^h)\tp \bar{\jbf}^h v^h = 0$ and $(v^h)\tp \bar{\rbf}^h v^h \geq 0$ for all $v^h \in \R^{n^h}$.
\begin{remark}\label{rem:space-discrete-linear-operators}
    If $\jbf$ and $\rbf$ are linear, then we can identify $\bar{\jbf}^h$ and $\bar{\rbf}^h$ with matrices. The same holds true for $\bar{\bbf}^h$ if $\bbf$ is linear and $U$ finite-dimensional.
\end{remark}
An equivalent formulation of~\eqref{eq:time-continuous-spatially-discrete} is given by

\begin{equation}\label{eq:time-continuous-spatially-discrete-coefficients-with-mass-matrices}
    \vec{
        \patc 1 \ham^h(c_1^h, c_2^h)
        \\
        \Mmat_2^h \pat c_2^h
        \\
        0
    }
    =
    \big(\bar{\jbf}^h - \bar{\rbf}^h\big)
    \vec{
        \pat c_1^h
        \\
        (\Mmat_2^h)^{-1} \patc 2 \ham^h(c_1^h, c_2^h)
        \\
        c_3^h
    }
    +
    \bar{\bbf}^h{(c^h)} u.\end{equation}

To cast~\eqref{eq:time-continuous-spatially-discrete-coefficients-with-mass-matrices} into the form~\eqref{eq:model-strong}, define $\Kmat^h \coloneqq \diag(\eye_{n_1^h}, (\Mmat_2^h)^{-1}, \eye_{n_3^h}) \in \R^{n^h, n^h}$ as well as $\jbf^h, \rbf^h \colon \R^{n^h} \to \R^{n^h}$ and~$\bbf^h \colon \R^{n^h} \to \boundedlinear(U, \R^{n^h})$ by
\begin{equation*}    \jbf^h(v^h) \coloneqq \Kmat^h \bar{\jbf}^h (\Kmat^h v^h),
    \qquad
    \rbf^h(v^h) \coloneqq \Kmat^h \bar{\rbf}^h (\Kmat^h v^h),
    \qquad
    \bbf^h{(c^h)} \coloneqq \Kmat^h \bar{\bbf}^h{(c^h)}\end{equation*}
for all $v^h \in \R^{n^h}$ such that~\eqref{eq:time-continuous-spatially-discrete-coefficients-with-mass-matrices} can be written as

\begin{equation}\label{eq:time-continuous-spatially-discrete-coefficients}
    \begin{aligned}
        \vec{
        \patc 1 \ham^h(c_1^h, c_2^h)
        \\
        \dot{c}_2^h
        \\
        0
    }
    & =
    \big(\jbf^h - \rbf^h\big)
    \vec{
        \dot{c}_1^h
        \\
        \patc 2 \ham^h(c_1^h, c_2^h)
        \\
        c_3^h
    }
    +
    \bbf^h(c^h) u.
    \end{aligned}\end{equation}

Due to~\eqref{eq:J-R-properties} and the definitions of $\bar{\jbf}^h, \bar{\rbf}^h$, and $\bar{\bbf}^h$, we have $(v^h)\tp \jbf^h (v^h) = 0$ and $(v^h)\tp \rbf^h (v^h) \geq 0$ for all $v^h \in \R^{n^h}$ such that~\eqref{eq:time-continuous-spatially-discrete-coefficients}, together with an appropriately defined output~$y^h$, is indeed of the form~\eqref{eq:model-strong}.
In particular, for solutions $c^h = (c_1^h, c_2^h, c_3^h)$ of~\eqref{eq:time-continuous-spatially-discrete-coefficients} we obtain the dissipation inequality
\begin{equation*}
        \ham^h(c_{1}^h(\theta_2), c_{2}^h(\theta_2)) - \ham^h(c_{1}^h(\theta_1), c_{2}^h(\theta_1))
        \leq
        \int_{\theta_1}^{\theta_2}
            \langle
                y^h
                ,
                u
            \rangle _{U^*, U}
        \dt\end{equation*}
for all $0 \leq \theta_1 \leq \theta_2$ and $u \colon [0,T] \to U$.

To conclude, switching to the coefficient representation of~$z^h$ eliminates the need for the projection~$\proj_2^h$ if the discrete energy functional~$\ham^h$ is defined in a suitable manner.
Moreover, the coefficient dynamics can be formulated in the form~\eqref{eq:model-strong} and, hence, satisfy an energy balance and the corresponding dissipation inequality.

In the special case where $z_2 = \bullet$, the presented structure-preserving space discretization scheme is simply a Galerkin projection.
This has been exploited, e.g., in \cite{EggHS21}, where a formulation with $z_1$ only has been considered.
It turns out that this pure $z_1$-formulation is not only amenable for structure-preserving schemes when approximating the infinite-dimensional state in a finite-dimensional subspace, but also when using a finite-dimensional nonlinear manifold instead.
This is formalized in the following proposition, where the discretized system is obtained by the Dirac--Frenkel variational principle, cf.~\cite{Dir30,Fre34,Lub08}, which enforces orthogonality of the residual and the tangent space of the approximation manifold.
In particular, the proposition applies the Neural Galerkin method \cite{BruPV24}, which is based on approximating the state by an artificial neural network with time-dependent weights.

\begin{proposition}
	\label{prop:Dirac-Frenkel}
	Consider the infinite-dimensional system
	\begin{subequations}\label{eq:model-z1-weak}
	    \begin{align}	    	\label{eq:model-z1-weak-state}
		    \Blangle
		    \patz{} \ham(z)
		    ,
		    \phi
		    \Brangle_{X^*, X}
		    & =
		    \Blangle
		        (\jbf - \rbf)\pat z
		        ,
		        \phi
		    \Brangle_{X^*, X}
		    +
		    \blangle
		        \bbf(z) u
		        ,
		        \phi
		    \brangle_{X^*,X}
		    ,
		    \\
		    y
		    \label{eq:model-z1-weak-output}
		    & =
		    \bbf(z)^* \pat z\end{align}\end{subequations}
	as special case of \eqref{eq:model-weak} with $z_2 = z_3 = \bullet$ and $X$ being a Hilbert space.
	Furthermore, consider an approximation ansatz $z\approx \interp^h(c^h)$ with continuously differentiable mapping $\interp^h\colon \R^{n^h}\to X^h$.
	Then, the corresponding discretized system resulting from the Dirac--Frenkel variational principle is of the form
	\begin{subequations}\label{eq:model-z1-Dirac-Frenkel}
		\begin{align}			\label{eq:model-z1-Dirac-Frenkel-state}
		    \partial_{c^h} \ham^h(c^h)
		    & = \jbf^h(c^h,\dot{c}^h) - \rbf^h(c^h,\dot{c}^h)
		    +
		    \bbf^h(c^h) u,
		    \\
		    \label{eq:model-z1-Dirac-Frenkel-output}
		    y^h
		    &
		    =
		    \bbf^h(c^h)^* \dot{c}^h,\end{align}\end{subequations}
	with $\ham^h\vcentcolon= \ham\circ\interp^h$, $\bbf^h\colon \R^{n^h} \to \boundedlinear(U, \R^{n^h})$, and $\jbf^h, \rbf^h \colon \R^{n^h}\times\R^{n^h} \to \R^{n^h}$ satisfying
	\begin{equation*}		(w^h)\tp \jbf^h(v^h,w^h) = 0\quad\text{and}\quad (w^h)\tp \rbf^h (v^h,w^h) \geq 0\quad\text{for all }v^h,w^h \in \R^{n^h}.\end{equation*}
\end{proposition}

\begin{proof}
	The Dirac--Frenkel variational principle enforces the residual to be orthogonal to the tangent space of the manifold parametrized by $\interp^h$, cf.~\cite[Sec.~II.1]{Lub08}.
	This leads to a discretized state equation of the form
	\begin{equation*}		\Blangle
		    \patz{} \ham(\interp^h(c^h))
		    ,
		    \phi
		    \Brangle_{X^*, X}
		     =
		    \Blangle
		        (\jbf - \rbf) (D\interp^h(c^h)\dot{c}^h)
		        ,
		        \phi
		    \Brangle_{X^*, X}
		    +
		    \blangle
		        \bbf(\interp^h(c^h)) u
		        ,
		        \phi
		    \brangle_{X^*,X}\end{equation*}
	for all $\phi$ in the tangent space of the manifold parametrized by $\interp^h$.
	Here, we can replace $\phi$ by the partial derivatives $\derivative\interp^h(c^h)e_i$ for $i=1,\ldots,n^h$.
	Then, straightforward calculations lead to the discretized state equation \eqref{eq:model-z1-Dirac-Frenkel-state} with
	\begin{align*}	    \jbf^h(v^h,w^h)
	    &\coloneqq
	    \vec{
	        \big\langle \jbf (\derivative\interp^h(v^h)w^h), \derivative\interp^h(v^h) e_j \big\rangle _{X^*, X}
	    }_{j=1,\dots,n^h}
	    ,\\
	    \rbf^h(v^h,w^h)
	    &\coloneqq
	    \vec{
	        \big\langle \rbf (\derivative\interp^h(v^h)w^h), \derivative\interp^h(v^h) e_j \big\rangle _{X^*, X}
	    }_{j=1,\dots,n^h}
	    ,
	    \\
	    \bbf^h{(v^h)} u
	    &\coloneqq
	    \vec{
	        \big\langle \bbf {(\interp^h(v^h))} u, \derivative\interp^h(v^h) e_j \big\rangle _{X^*, X}
	    }_{j=1,\dots,n^h}.\end{align*}
	The properties of $\jbf$ and $\rbf$ imply $(w^h)\tp \jbf^h(v^h,w^h)=0$ and $(w^h)\tp \rbf^h(v^h,w^h)\ge 0$ for all $v^h,w^h \in \R^{n^h}$.
	Similarly, the output equation \eqref{eq:model-z1-Dirac-Frenkel-output} is obtained by substituting the approximation ansatz $z\approx \interp^h(c^h)$ into the output equation \eqref{eq:model-z1-weak-output} of the original system.
\end{proof}

It should be emphasized that the discretized system \eqref{eq:model-z1-Dirac-Frenkel} in \Cref{prop:Dirac-Frenkel} does not have the same structure as the original system, since $\jbf^h$ and $\rbf^h$ explicitly depend on the state~$c^h$.
However, the resulting structure with explicitly state-dependent $\jbf^h$ and $\rbf^h$ still guarantees a power balance as in \Cref{prop:energy-balance}, which can be shown by the same calculations. 
If system~\eqref{eq:model-z1-Dirac-Frenkel} is discretized in time using piecewise polynomials, one can compute~$c^h$ from the derivative~$\dot{c}^h$ by integrating and using the initial value.
Hence, in that case, the system can be realized without explicitly depending on~$c^h$.

\subsection{Model reduction}\label{subsec:model-reduction}
As mentioned in the introduction, our proposed model reduction scheme is closely related to~\cite{ChaBG16}.
For model reduction, we assume that the state and input spaces are finite-dimensional, i.e., we have~$X_i = Z_i = \R^{n_i}$ and~$U = \RR^m$ for some~$n_i, m \in \N$,~$i=1,2,3$.
Furthermore, we assume that
\[
    X_i^h
    = \R^{\nr_i}
    = \Vmat_i\tp X_i
\]
with an orthogonal matrix $\Vmat_i \in \R^{n_i,\nr_i}$, $\nr_i \leq n_i$, $i=1,2,3$.

Note, however, that also non-orthogonal matrices $\Vmat_i$ with full column rank can be considered.

To emphasize that we are working with reduced order models, we denote the reduced order states by $\zr_i$ instead of~$z_i^h$.
Since $X_i^h = \Vmat_i\tp X_i$, we essentially approximate $z_i \approx \Vmat_i \zr_i$, and $v\mapsto \Vmat_i v$ takes the role of the interpolation operator $\interp_i^h$.
Furthermore, since $\Vmat_i$ is orthogonal, the projection $\projmor_i^h$ is given by
\[
    \projmor_i^h \colon \R^{n_i} \to \R^{n_i},\quad
    z \mapsto \Vmat_i \Vmat_i\tp z.
\]
Similar to the space discretization, as an alternative to~\eqref{eq:time-continuous-spatially-discrete} we state the dynamics for the reduced order states $\zr_i$.
Accordingly, we define the reduced order energy $\tilde{\ham}\colon \R^{\nr_1} \times \R^{\nr_2} \to \R$ by

\begin{equation*}    \hamr(\zr_1, \zr_2) \coloneqq \ham(\Vmat_1 \zr_1, \Vmat_2 \zr_2)\end{equation*}

for all $(\zr_1, \zr_2) \in \R^{\nr_1} \times \R^{\nr_2}$.
Similar to the calculation in the previous subsection, for any $\zr_{12}, \vr_{12} \in X_1^h \times X_2^h$ with $\zr_{12} = (\zr_1, \zr_2)$ and $\vr_{12}=(\vr_1, \vr_2)$ with the chain rule we obtain

\[
    \nabla \hamr(\zr_{12})
    = \vec{\Vmat_1\tp \patz 1 \ham(\Vmat_{12} \zr_{12}) \\ \Vmat_2\tp \patz 2 \ham(\Vmat_{12} \zr_{12})}
    = \vec{\Vmat_1\tp \projmor_1^h \patz 1 \ham(\Vmat_{12} \zr_{12}) \\ \Vmat_2\tp \projmor_2^h \patz 2 \ham(\Vmat_{12} \zr_{12})},
\]
where $\Vmat_{12} \coloneqq \diag(\Vmat_1, \Vmat_2)$, and we can set $\patzr i \hamr(\zr_1, \zr_2) \coloneqq \Vmat_i\tp \patz i \ham(\Vmat_1 \zr_1, \Vmat_2 \zr_2)$ as before.
Now, the reduced order dynamics~\eqref{eq:time-continuous-spatially-discrete} can be written as
\begin{equation}	\label{eq:ROM_after_projection}
    \vec{
        \Vmat_1\tp\patz 1 \ham(\Vmat_1 \zr_1, \Vmat_2 \zr_2)
        \\
        \Vmat_2\tp \Vmat_2 \pat \zr_2
        \\
        0
    }
    = \Vmat\tp
    (\jbf - \rbf)
    \vec{
        \Vmat_1 \pat \zr_1
        \\
        \Vmat_2 \Vmat_2\tp \patz 2 \ham(\Vmat_1 \zr_1, \Vmat_2 \zr_2)
        \\
        \Vmat_3 \zr_3
    }
    +
    \Vmat\tp\bbf(\zr) u\end{equation}
or, equivalently,
\begin{equation}\label{eq:time-continuous-reduced-order}
    \vec{
        \patzr 1 \hamr(\zr_1, \zr_2)
        \\
        \pat \zr_2
        \\
        0
    }
    =
    (\jbfr - \rbfr)
    \vec{
        \pat \zr_1
        \\
        \patzr 2 \hamr(\zr_1, \zr_2)
        \\
        \zr_3
    }
    +
    \bbfr{(\zr)} u\end{equation}
with
\begin{equation*}    \jbfr(v) \coloneqq \Vmat\tp \jbf (\Vmat v)
    ,\qquad
    \rbfr(v) \coloneqq \Vmat\tp \rbf (\Vmat v)
    ,\qquad
    \bbfr(\zr) \coloneqq \Vmat\tp \bbf (\Vmat \zr)
    ,\qquad
    \Vmat \coloneqq \diag(\Vmat_1, \Vmat_2, \Vmat_3).\end{equation*}
Since $v\tp \jbfr (v) = 0$ and $v\tp \rbfr (v) \geq 0$ for all $v\in\R^{\nr}$, model~\eqref{eq:time-continuous-reduced-order} is of the form~\eqref{eq:model-strong}.
Consequently, for sufficiently smooth solutions $\zr = (\zr_1, \zr_2, \zr_3)$ of~\eqref{eq:time-continuous-reduced-order} and $u$ as before, we obtain the dissipation inequality
\begin{equation}\label{eq:dissipation-reduced-order}
\begin{aligned}
    \hamr((\zr_1, \zr_2)(\theta_2)) - \hamr((\zr_1, \zr_2)(\theta_1))
    &
    =
    \int_{\theta_1}^{\theta_2}
    -
    \Bigtextvec{
        \pat \zr_1
        \\
        \patzr 2 \hamr(\zr_1, \zr_2)
        \\
        \zr_3
    }\tp
    \rbfr\,
    \Bigtextvec{
        \pat \zr_1
        \\
        \patzr 2 \hamr(\zr_1, \zr_2)
        \\
        \zr_3
    }
    +
    \yr\tp
    u
    \dt
    \\
    &
    \leq
    \int_{\theta_1}^{\theta_2}
    \yr\tp
    u
    \dt
\end{aligned}\end{equation}
for all $0 \leq \theta_1 \leq \theta_2$.

\begin{remark}
In general, the evaluation of $\jbfr$, $\rbfr$, $\bbfr$, $\patzr 1 \hamr$, and $\patzr 2 \hamr$ involves the corresponding full order operator and, hence, scales with the dimension of the full order model.
To address this issue, an additional approximation of the reduced order model, known as hyperreduction, is usually performed to obtain a computationally cheap surrogate model.
While structure-preserving hyperreduction is not within the scope of this paper, we refer to~\cite{ChaBG16} for a corresponding approach for port-Hamiltonian systems{, i.e., for the special case with $z_1 = z_3 = \bullet$ and with linear operators $\jbf$ and $\rbf$}.
\end{remark}

\begin{remark}
	Similar to the spatial discretization in \Cref{subsec:space}, the model reduction approach presented above is based on a Galerkin projection, i.e., the trial and test spaces coincide.
	In contrast, the method presented in \cite{ChaBG16}, which considers the special case with empty $z_1$ and $z_3$, is based on a Petrov--Galerkin projection.
	In our notation, this corresponds to replacing $\Vmat_2\tp$ in the second block equation of \eqref{eq:ROM_after_projection} by some matrix $\Wmat_2\tp$ satisfying the biorthogonality condition $\Wmat_2\tp\Vmat_2=\Imat$.
	In this case, the term $\Vmat_2 \Vmat_2\tp \patz 2 \ham(\Vmat_1 \zr_1, \Vmat_2 \zr_2)$ in \eqref{eq:ROM_after_projection} needs to be replaced by $\Wmat_2 \Vmat_2\tp \patz 2 \ham(\Vmat_1 \zr_1, \Vmat_2 \zr_2)$ to preserve the structure.
	Using a Petrov-Galerkin projection provides some additional degree of freedom which may be exploited, e.g., by choosing $\Wmat_2$ such that the error in the additional approximation $\patz 2 \ham(\Vmat_1 \zr_1, \Vmat_2 \zr_2)\approx \Wmat_2 \Vmat_2\tp\patz 2 \ham(\Vmat_1 \zr_1, \Vmat_2 \zr_2)$ is as small as possible.
	For instance, in \cite{ChaBG16} the authors propose to determine $\Wmat_2$ via a proper orthogonal decomposition applied to a snapshot matrix of the gradient of the Hamiltonian.
\end{remark}

\begin{remark}
	Recently, in \cite{GlaM26} the authors present a new structure-preserving model reduction scheme for port-Hamiltonian systems based on a Petrov--Galerkin projection.
	In contrast to the approach presented here, their method does not require the additional approximation step $\nabla \ham(\Vmat \zr)\approx \Vmat \Vmat\tp\patz 2 \ham(\Vmat\zr)$.
	A key assumption in their approach is that $\Jmat-\Rmat$ is invertible.
	The here considered $z_1$--$z_2$--$z_3$-formulation yields an alternative interpretation of their method.
	When ignoring the output equation, the approach in \cite{GlaM26} is equivalent to the following procedure:
	\begin{enumerate}
		\item Transform the $z_2$-formulation to an equivalent $z_1$-formulation by multiplying the state equation by $(\Jmat-\Rmat)^{-1}$ from the left. Here, one exploits that the inverse of a matrix with negative semi-definite symmetric part has again a negative semi-definite symmetric part, cf.~\cite[Lem.~20]{AchAM23}.
		\item Perform a structure-preserving Galerkin projection.
		\item Assuming that $\Vmat\tp(\Jmat-\Rmat)^{-1}\Vmat$ is invertible, transform back to an equivalent $\zr_2$-formulation by multiplying by $(\Vmat\tp(\Jmat-\Rmat)^{-1}\Vmat)^{-1}$ from the left.
	\end{enumerate}
	The Galerkin projection onto a linear subspace in the second step can be replaced by a structure-preserving projection onto a nonlinear submanifold, cf.~\Cref{prop:Dirac-Frenkel}, which is also (implicitly) exploited in \cite{GlaM26}.
	Moreover, the alternative interpretation from above can be extended to the case when the output equation is taken into account, but then another condition for the column space of $\bbf$ is needed to arrive at a port-Hamiltonian system in the end, cf.~\cite[Thm.~3.1]{GlaM26}.
\end{remark}

\begin{remark}
	In this paper, we focus on the preservation of the energy-based structure, whereas a detailed treatment of the preservation of algebraic constraints is not within the scope of this manuscript.
	For model reduction approaches for differential--algebraic equations, which explicitly take into account the preservation of algebraic constraints, we refer to \cite{Ebe10,MoeRS11,BeaGM22} and the survey paper \cite{BenS17}.
\end{remark}

In \cite[Thm.~2.2]{ChaBG16}, the authors also present an error bound for the reduced-order model (ROM) resulting from the Petrov--Galerkin projection in combination with the projection of the gradient of the Hamiltonian.
This error bound directly applies to our setting in the special case where $z_1$ and $z_3$ are empty and $\jbf$ and $\rbf$ are matrices as in \eqref{eq:model:altschulz-finite-dim}.
While an extension of this result to the general structures \eqref{eq:model:altschulz-finite-dim} and \eqref{eq:model-strong} is not within the scope of this paper, in the following we provide an error bound for the special case of a semi-explicit full-order model (FOM) with index one.
In particular, we consider the special case of \eqref{eq:model:altschulz-finite-dim} with $\Jmat_{11}=0$, $\Rmat_{11}=\eye$, $\Jmat_{21}=\Rmat_{21}$, $\Jmat_{31}=\Rmat_{31}$, $\det(\Jmat_{33}-\Rmat_{33})\ne 0$ and without in- and outputs, where $\Jmat_{ij}$ and $\Rmat_{ij}$ denote the $(i,j)$-blocks of the $3\times3$ block matrices $\Jmat$ and $\Rmat$, respectively.
The resulting system reads
\begin{equation*}	\vec{
		\patz 1 \ham(z_1, z_2)\\
		\dot{z}_2\\
		0
	}
	=
	\begin{bmatrix}		-\eye 	& \Jmat_{12}-\Rmat_{12} & \Jmat_{13}-\Rmat_{13} \\
		0 	& \Jmat_{22}-\Rmat_{22} & \Jmat_{23}-\Rmat_{23} \\
		0 	& \Jmat_{32}-\Rmat_{32} & \Jmat_{33}-\Rmat_{33}\end{bmatrix}
	\vec{
		\dot{z}_1\\
		\patz 2 \ham(z_1, z_2)\\
		z_3
	}\end{equation*}
or, equivalently,
\begin{equation}	\label{eq:semi-explicit-FOM}
	\vec{
		\dot{z}_1\\
		\dot{z}_2\\
		0
	}
	=
	\begin{bmatrix}		-\eye 	& \Jmat_{12}-\Rmat_{12} & \Jmat_{13}-\Rmat_{13} \\
		0 	& \Jmat_{22}-\Rmat_{22} & \Jmat_{23}-\Rmat_{23} \\
		0 	& \Jmat_{32}-\Rmat_{32} & \Jmat_{33}-\Rmat_{33}\end{bmatrix}
	\vec{
		\patz 1 \ham(z_1, z_2)\\
		\patz 2 \ham(z_1, z_2)\\
		z_3
	}.\end{equation}
Here, the differential equations and algebraic constraints are clearly separated, which allows us to only reduce the dimension of the differential part and keep the dimension of the algebraic part by setting $\Vmat_3=\eye$, as is often done in model reduction for differential--algebraic equations, see e.g.~\cite{Ebe10,MoeRS11}.
The resulting ROM reads
\begin{equation}	\label{eq:semi-explicit-ROM}
	\vec{
		\dot{\zr}_1\\
		\dot{\zr}_2\\
		0
	}
	=
	\begin{bmatrix}		-\eye 	& \Vmat_1\tp(\Jmat_{12}-\Rmat_{12})\Vmat_2 & \Vmat_1\tp(\Jmat_{13}-\Rmat_{13}) \\
		0 	& \Vmat_2\tp(\Jmat_{22}-\Rmat_{22})\Vmat_2 & \Vmat_2\tp(\Jmat_{23}-\Rmat_{23}) \\
		0 	& (\Jmat_{32}-\Rmat_{32})\Vmat_2 & \Jmat_{33}-\Rmat_{33}\end{bmatrix}
	\vec{
		\patzr 1 \hamr(\zr_1, \zr_2)\\
		\patzr 2 \hamr(\zr_1, \zr_2)\\
		\zr_3
	}.\end{equation}
The following proposition yields an error bound between the states of the semi-explicit FOM \eqref{eq:semi-explicit-FOM} and the corresponding ROM \eqref{eq:semi-explicit-ROM}.

\begin{proposition}
	\label{thm:error_bound}
	Let $(z_1,z_2,z_3)\colon [0,T]\to\R^{n_1}\times \R^{n_2}\times\R^{n_3}$ be a solution of the semi-explicit FOM \eqref{eq:semi-explicit-FOM} with initial value $(z_1(0),z_2(0),z_3(0))\in\R^{n_1}\times \R^{n_2}\times\R^{n_3}$ and $(\zr_1,\zr_2,\zr_3)\colon [0,T]\to\R^{\nr_1}\times \R^{\nr_2}\times\R^{\nr_3}$ be a solution of the ROM \eqref{eq:semi-explicit-ROM} with initial value $(\Vmat_1\tp z_1(0),\Vmat_2\tp z_2(0),z_3(0))$.
	Moreover, let $\nabla\ham$ be Lipschitz continuous with Lipschitz constant $K$.
	Then, the approximation error satisfies the bound
	\begin{align*}		\Big\lVert\vec{
		z_1-\Vmat_1\zr_1\\
		z_2-\Vmat_2\zr_2
		}\Big\rVert_{L^2(0,T;\R^{n_1+n_2})}^2
		&\le (1+K\norm{\Pmat}Te^{C_1T})\Big\lVert\vec{
		(\Imat-\projmor_1^h)z_1\\
		(\Imat-\projmor_2^h)z_2
		}\Big\rVert_{L^2(0,T;\R^{n_1+n_2})}^2\\
		&\ \ +\norm{\Pmat}Te^{C_1T}\lVert (\Imat-\projmor_2^h)\patz 2 \ham(z_1, z_2)\rVert_{L^2(0,T;\R^{n_1+n_2})}^2,\\[0.2cm]
		\lVert z_3-\zr_3\rVert_{L^2(0,T;\R^{n_3})}^2
		&\le KC_2(1+K\norm{\Pmat}Te^{C_1T})\Big\lVert\vec{
		(\Imat-\projmor_1^h)z_1\\
		(\Imat-\projmor_2^h)z_2
		}\Big\rVert_{L^2(0,T;\R^{n_1+n_2})}^2\\
		&\ \ +C_2(1+K\norm{\Pmat}Te^{C_1T})\lVert (\Imat-\projmor_2^h)\patz 2 \ham(z_1, z_2)\rVert_{L^2(0,T;\R^{n_1+n_2})}^2\end{align*}
	with $C_1 \vcentcolon= \norm{\Pmat}(1+3K)$, $C_2\vcentcolon= \norm{(\Jmat_{33}-\Rmat_{33})^{-1}(\Jmat_{32}-\Rmat_{32})}$, and
	\begin{equation*}		\Pmat \vcentcolon=
		\begin{bmatrix}			-\Imat & \Jmat_{12}-\Rmat_{12}+(\Jmat_{13}-\Rmat_{13})(\Jmat_{33}-\Rmat_{33})^{-1}(\Jmat_{32}-\Rmat_{32})\\
			0 & \Jmat_{22}-\Rmat_{22}+(\Jmat_{23}-\Rmat_{23})(\Jmat_{33}-\Rmat_{33})^{-1}(\Jmat_{32}-\Rmat_{32})\end{bmatrix}
		.\end{equation*}
\end{proposition}

\begin{proof}
	The proof combines ideas from \cite{Ebe10} and the proof of \cite[Thm.~2.2]{ChaBG16}.
	We first split the error in the differential variables $\differror\vcentcolon=\begin{bsmallmatrix}
		z_1-\Vmat_1\zr_1\\
		z_2-\Vmat_2\zr_2
	\end{bsmallmatrix}$ into two components via
	\begin{equation*}		\differror = \underbrace{\vec{
		z_1-\projmor_1^hz_1\\
		z_2-\projmor_2^hz_2
		}}_{=\vcentcolon\rho}+\underbrace{\vec{
		\projmor_1^hz_1-\Vmat_1\zr_1\\
		\projmor_2^hz_2-\Vmat_2\zr_2
		}}_{=\vcentcolon\theta}\end{equation*}
	and introduce
	\begin{equation*}		\thetar\vcentcolon= \vec{
		\Vmat_1\tp z_1-\zr_1\\
		\Vmat_2\tp z_2-\zr_2
		}.\end{equation*}
	We have $\theta=\Vmat_{12}\thetar$ and the special choice of the initial values for the ROM state implies $\thetar(0)=0$.
	For any $t\in[0,T]$, we obtain
	\begin{equation}		\label{eq:product_rule_for_squared_norm}
		\frac{\mathrm{d}}{\dt}(\norm{\thetar(t)}^2) = 2\thetar(t)\tp \dot{\thetar}(t) = 2\thetar(t)\tp\vec{
		\Vmat_1\tp \dot{z}_1(t)-\dot{\zr}_1(t)\\
		\Vmat_2\tp \dot{z}_2(t)-\dot{\zr}_2(t)
		}.\end{equation}
	For the sake of brevity, we omit the time dependency in the following, and use the dynamics of the FOM \eqref{eq:semi-explicit-FOM} and the ROM \eqref{eq:semi-explicit-ROM} to get
	\begin{equation}		\label{eq:thetar_dot}
		\begin{aligned}
			\Vmat_1\tp \dot{z}_1-\dot{\zr}_1 = &\Vmat_1\tp(\patz 1 \ham(\Vmat_{12}\zr_{12})-\patz 1 \ham(z_{12}))+\Vmat_1\tp(\Jmat_{13}-\Rmat_{13})(z_3-\zr_3)\\
			&+\Vmat_1\tp(\Jmat_{12}-\Rmat_{12})(\patz 2\ham(z_{12})-\projmor_2^h\patz2\ham(\Vmat_{12}\zr_{12})),\\[0.2cm]
			\Vmat_2\tp \dot{z}_2-\dot{\zr}_2 = &\Vmat_2\tp(\Jmat_{22}-\Rmat_{22})(\patz 2\ham(z_{12})-\projmor_2^h\patz2\ham(\Vmat_{12}\zr_{12}))+\Vmat_2\tp(\Jmat_{23}-\Rmat_{23})(z_3-\zr_3)
		\end{aligned}\end{equation}
	with $z_{12}\vcentcolon=(z_1, z_2)$.
	Solving the respective last block equations in the FOM \eqref{eq:semi-explicit-FOM} and the ROM \eqref{eq:semi-explicit-ROM} for $z_3$ and $\zr_3$, we obtain
	\begin{equation}		\label{eq:z3_in_terms_of_z1_and_z2}
		z_3-\zr_3 = (\Jmat_{33}-\Rmat_{33})^{-1}(\Jmat_{32}-\Rmat_{32})(\patz 2\ham(z_{12})-\projmor_2^h\patz2\ham(\Vmat_{12}\zr_{12})).\end{equation}
	Combining \eqref{eq:thetar_dot} and \eqref{eq:z3_in_terms_of_z1_and_z2} yields
	\begin{equation*}		\dot{\thetar} = \Vmat_{12}\tp\Pmat
		\begin{bmatrix}			\Imat & 0\\
			0 & \projmor_2^h\end{bmatrix}
		(\nabla\ham(z_{12})-\nabla\ham(\Vmat_{12}\zr_{12}))+\Vmat_{12}\tp\Pmat
		\begin{bmatrix}			0\\
			\Imat\end{bmatrix}
		(\Imat-\projmor_2^h)\patz2\ham(z_{12}).\end{equation*}
	Together with \eqref{eq:product_rule_for_squared_norm} and using the Cauchy--Schwarz and Young's inequalities, the Lipschitz continuity of $\nabla\ham$, and the fact that $\Vmat_{12}\tp\Vmat_{12}=\Imat$ implies $\norm{\theta}=\norm{\Vmat_{12}\thetar}=\norm{\thetar}$, this yields
	\begin{align*}		\frac{\mathrm{d}}{\dt}(\norm{\thetar}^2) &\le 2\norm{\thetar}\norm{\Pmat}(\norm{\nabla\ham(z_{12})-\nabla\ham(\Vmat_{12}\zr_{12})}+\norm{(\Imat-\projmor_2^h)\patz2\ham(z_{12})})\\
		&\le 2\norm{\thetar}\norm{\Pmat}(K\norm{\differror}+\norm{(\Imat-\projmor_2^h)\patz2\ham(z_{12})})\\
		&\le 2\norm{\thetar}\norm{\Pmat}(K(\norm{\rho}+\norm{\theta})+\norm{(\Imat-\projmor_2^h)\patz2\ham(z_{12})})\\
		&= 2K\norm{\Pmat}\norm{\thetar}^2+2\norm{\Pmat}(K\norm{\rho}+\norm{(\Imat-\projmor_2^h)\patz2\ham(z_{12})})\norm{\thetar}\\
		&\le 2K\norm{\Pmat}\norm{\thetar}^2+\norm{\Pmat}(K(\norm{\rho}^2+\norm{\thetar}^2)+\norm{(\Imat-\projmor_2^h)\patz2\ham(z_{12})}^2+\norm{\thetar}^2)\\
		&= C_1\norm{\thetar}^2+\norm{\Pmat}(K\norm{\rho}^2+\norm{(\Imat-\projmor_2^h)\patz2\ham(z_{12})}^2).\end{align*}
	Integrating over $[0,t]$ for some $t\in (0,T]$ and using a Gronwall-like inequality \cite[Thm.~1.3.4]{Pac98}, we obtain
	\begin{align*}		\norm{\thetar(t)}^2 &\le \int_0^t\norm{\Pmat}(K\norm{\rho(s)}^2+\norm{(\Imat-\projmor_2^h)\patz2\ham(z_{12}(s))}^2)e^{C_1(t-s)}\,\ds\\
		&\le e^{C_1T}\int_0^T\norm{\Pmat}(K\norm{\rho(s)}^2+\norm{(\Imat-\projmor_2^h)\patz2\ham(z_{12}(s))}^2)\,\ds.\end{align*}
	Using this bound, we get
	\begin{align*}		&\norm{\differror}_{L^2(0,T;\R^{n_1+n_2})}^2 = \norm{\rho}_{L^2(0,T;\R^{n_1+n_2})}^2+\norm{\theta}_{L^2(0,T;\R^{n_1+n_2})}^2\\
		&\le \norm{\rho}_{L^2(0,T;\R^{n_1+n_2})}^2+Te^{C_1T}\int_0^T\norm{\Pmat}(K\norm{\rho(s)}^2+\norm{(\Imat-\projmor_2^h)\patz2\ham(z_{12}(s))}^2)\,\ds\\
		&= (1+K\norm{\Pmat}Te^{C_1T})\norm{\rho}_{L^2(0,T;\R^{n_1+n_2})}^2+\norm{\Pmat}Te^{C_1T}\lVert (\Imat-\projmor_2^h)\patz 2 \ham(z_1, z_2)\rVert_{L^2(0,T;\R^{n_1+n_2})}^2,\end{align*}
	which shows the error bound for the differential variables.
	To obtain the bound for the algebraic variables, we use \eqref{eq:z3_in_terms_of_z1_and_z2} to obtain
	\begin{align*}		\norm{z_3(t)-\zr_3(t)} &\le C_2\norm{(\patz 2\ham(z_{12}(t))-\projmor_2^h\patz2\ham(\Vmat_{12}\zr_{12}(t)))}\\
		&\le C_2(\norm{(\Imat-\projmor_2^h)\patz2\ham(z_{12}(t))}+K\norm{\differror(t)})\end{align*}
	and, using the previously derived bound for $\norm{\differror}_{L^2(0,T;\R^{n_1+n_2})}^2$,
	\begin{align*}		\norm{z_3-\zr_3}_{L^2(0,T;\R^{n_3})}^2 \le &C_2K(1+K\norm{\Pmat}Te^{C_1T})\norm{\rho}_{L^2(0,T;\R^{n_1+n_2})}^2\\
		&+C_2(1+K\norm{\Pmat}Te^{C_1T})\lVert (\Imat-\projmor_2^h)\patz 2 \ham(z_1, z_2)\rVert_{L^2(0,T;\R^{n_1+n_2})}^2.\qedhere\end{align*}
\end{proof}
Similar to \cite[Thm.~2.2]{ChaBG16}, the error bounds in \Cref{thm:error_bound} consist of two terms: The first one scales with the error obtained by projecting the differential variables onto the column space of $\Vmat_{12}$, whereas the second one scales with the error corresponding to the projection of $\patz 2 \ham(z_1, z_2)$ onto the column space of $\Vmat_2$.

\subsection{Practical realization of time discretization}\label{subsec:time-discretization}

Although the test function $\phi^\tauh$ in~\eqref{eq:model-discrete} is such that the integral on the left-hand side can be computed exactly using suitable quadrature rules, this does not necessarily hold true for the integral on the right-hand side, since $\jbf$, $\rbf$, and $\bbf$ may be nonlinear.
Hence, to realize \Cref{scheme:full-discretization}, the right-hand side needs to be approximated appropriately.
To formalize this, we consider a general quadrature formula on $[0,1]$ with $\quadnodes \in \N$ quadrature nodes and quadrature weights $\omega_\ell$, $\ell = 1,\dots,\quadnodes$.
We assume that the weights $\omega_\ell$ are positive with $\sum_{\ell=1}^{\quadnodes} \omega_\ell = 1$.
Now, for $j=1,\dots,m$, consider quadrature formulas $Q_j \colon C([t_{j-1},t_{j}]) \to \R$ of the form
\begin{equation*}    Q_j(f) = (t_{j} - t_{j-1}) \sum_{\ell=1}^{\quadnodes} \omega_\ell f(\tau_{j,\ell})\end{equation*}
with the quadrature nodes $\tau_{j,\ell} \in [t_{j-1}, t_{j}]$, $\ell = 1,\dots,\quadnodes$.
Replacing the integral in the right-hand side of~\eqref{eq:model-discrete} by the sum of the quadrature formulas $Q_j$, we arrive at the model
\begin{equation}\label{eq:model-discrete-quadrature}
\begin{multlined}
    \int_{0}^{T}
    \Big\langle
        \Bigtextvec{\patz 1 \ham(z_{1}^\tauh,z_{2}^\tauh) \\ \pat z_{2}^\tauh \\ 0}
        ,
        \phi^\tauh
    \Big\rangle _{X^*, X}
    \dt
    \\
    \qquad=
    \sum_{j=1}^{m}
    Q_j
    \bigg(
    \Big\langle
        (\jbf - \rbf)
        \Bigtextvec{
            \pat z_{1}^\tauh
            \\
            \proj_2^\tau \proj_2^h \patz 2 \ham(z_{1}^\tauh, z_{2}^\tauh)
            \\
            z_{3}^\tauh
            }
        ,
        \phi^\tauh
    \Big\rangle _{X^*, X}
    +
    \big\langle
        \bbf(z^\tauh) u
        ,
        \phi^\tauh
    \big\rangle _{X^*,X}
    \bigg)
\end{multlined}\end{equation}
for all $\phi^\tauh \in \test^\tauh$.
As before, since the test spaces $\test_i^\tauh$ contain discontinuous functions that are nonzero only in one of the intervals $[t_{j-1}, t_{j}]$, system~\eqref{eq:model-discrete-quadrature} is equivalent to the time stepping scheme
\begin{equation}\label{eq:model-discrete-quadrature-localized}
\begin{multlined}
    \int_{t_{j-1}}^{t_{j}}
    \Big\langle
        \Bigtextvec{\patz 1 \ham(z_{1}^\tauh,z_{2}^\tauh) \\ \pat z_{2}^\tauh \\ 0}
        ,
        \phi^\tauh
    \Big\rangle _{X^*, X}
    \dt
    \\
    \qquad=
    Q_j
    \bigg(
    \Big\langle
        (\jbf - \rbf)
        \Bigtextvec{
            \pat z_{1}^\tauh
            \\
            \proj_2^\tau \proj_2^h \patz 2 \ham(z_{1}^\tauh, z_{2}^\tauh)
            \\
            z_{3}^\tauh
            }
        ,
        \phi^\tauh
    \Big\rangle _{X^*, X}
    +
    \big\langle
        \bbf(z^\tauh) u
        ,
        \phi^\tauh
    \big\rangle _{X^*,X}
    \bigg)
\end{multlined}\end{equation}
for all $\phi^\tauh \in \test^\tauh$ and $j=1,\dots,m$, where the $m$ nonlinear equations can be solved one after the other.

As with \Cref{prop:energy-balance-discrete}, we can show that solutions of~\eqref{eq:model-discrete-quadrature-localized} satisfy the discrete energy balance and dissipation inequality
\begin{align}	\ham(z_1^\tauh(t_{j}), &z_2^\tauh(t_{j})) - \ham(z_1^\tauh(t_{j-1}), z_2^\tauh(t_{j-1})) \nonumber \\
    &
    =
    Q_j
    \bigg(
    -
    \Big\langle
        \rbf
        \Bigtextvec{
            \pat z_{1}^\tauh
            \\
            \proj_2^\tau \proj_2^h \patz 2 \ham(z_{1}^\tauh, z_{2}^\tauh)
            \\
            z_{3}^\tauh
        }
        ,
        \Bigtextvec{
            \pat z_{1}^\tauh
            \\
            \proj_2^\tau \proj_2^h \patz 2 \ham(z_{1}^\tauh, z_{2}^\tauh)
            \\
            z_{3}^\tauh
        }
    \Big\rangle _{X^*, X}
    +
    \langle
        y^\tauh
        ,
        u
    \rangle _{U^*, U}
    \bigg)
    \label{eq:energy-balance-discrete-localized}
    \\
    &
    \leq
    Q_j
    \Big(
    \langle
        y^\tauh
        ,
        u
    \rangle _{U^*, U}
    \Big)
    \label{eq:dissipation-discrete-localized}\end{align}
for all $j=1,\dots,m$, where $y^\tauh$ is defined as in \Cref{prop:energy-balance-discrete}.

\begin{remark}\label{rem:no-z2-collocation}
    As mentioned before, if no $z_2$ component is present, then~\eqref{eq:model-discrete-quadrature-localized} can be interpreted as a collocation method, see also~\cite{EggHS21}.
    Essentially, this is due to the fact that both sides of~\eqref{eq:model-discrete-quadrature-localized} can be expressed as weighted averages of point evaluations of the integrands at points $t^{\ell}_{j} \in [t_{j-1}, t_{j}]$, $\ell \in \N$.
\end{remark}

Before we turn our attention to the numerical experiments, let us remark on the computational aspects of the projective terms.
\begin{remark}\label{rem:computational-aspects-projections}
    As seen in \Cref{rem:removing-z2}, the~$z_2$ variable can be eliminated from the model~\eqref{eq:model-weak} by doubling the corresponding dimension.
    Since no projections are necessary if~$z_2 = \bullet$, this alternative formulation enables us to fairly compare the additional computational effort associated with the projective terms.

    We have demonstrated in \Cref{subsec:space} that the spatial projection is implicitly encoded in the space discrete energy functional.
    This, however, also incorporates mass matrices in the second equation of~\eqref{eq:time-continuous-spatially-discrete-coefficients-with-mass-matrices}.
    Hence, to evaluate the operators~$\jbf^h$, $\rbf^h$ and $\bbf^h$, a typically sparse linear system needs to be solved.
    For the time discretization, the use of $L^2$-orthonormal basis functions eliminates additional linear systems and, hence, computing the temporal projection amounts to a series of tensor multiplications.

    If, in contrast, the~$z_2$ variable is eliminated from the model, then the nonlinear system~\eqref{eq:model-discrete-quadrature-localized} is more expensive to solve due to the increased system size.

    Which of these situations is favorable likely depends on the application.
    We expect the former approach to be beneficial if the dimension of~$Z^h_2$ is large compared to the dimensions of~$Z^h_1$ and~$Z^h_3$, and the latter approach to be beneficial if the linear systems necessary for the spatial discretization are a main driver of the computation cost.
\end{remark}

\section{Numerical Examples}\label{sec:numerics}
We present three examples where \Cref{scheme:full-discretization} is used in different ways.
For a nonlinear circuit example, we analyze the time discretization.
For the Cahn--Hilliard equation, on the other hand, we use the scheme for the spatial discretization and model reduction.
Finally, we consider a doubly nonlinear parabolic equation, where the introduced projections 
are crucial.
We start, however, with a short discussion of the implementation.

\subsection{Implementation details}\label{subsec:implementation-details}

\subsubsection*{Polynomials in time}
As described in \Cref{sec:discretization}, we use piecewise polynomials for the time-discrete spaces~$\ansatz_i^\tauh$.
In particular, we use scaled Legendre polynomials defined on the intervals $[t_{j-1}, t_{j}]$, $j=1,\dots,m$ as the bases of these spaces.
The scaling factors are chosen such that the polynomials are $L^2$-orthonormal.

\subsubsection*{Projections}
As seen in \Cref{subsec:space,subsec:model-reduction}, the spatial projection $\proj^h$ is implicitly included by considering the appropriate Hamiltonian.
To compute the temporal projection $\proj^\tau$ of a function $f \colon [t_{j-1},t_{j}] \to \R$, we use Gauß quadrature with $\projnodes \in \N$ nodes to approximate the integrals $\int_{t_{j-1}}^{t_{j}} f \phi_{p}^\tau \dt$ for the scaled Legendre polynomials $\phi_{p}^\tau$, $p=1,\dots,k+1$.
As before, $k$ denotes the polynomial degree and $\phi_{p}^\tau$ are assumed to form an $L^2$-orthonormal system such that the integrals above correspond to the coefficients w.r.t.~the basis $\{ \phi_p^\tau ~|~ p = 1,\dots,k+1\}$.
Since the first component of the left-hand sides of~\eqref{eq:model-discrete} and~\eqref{eq:model-discrete-quadrature-localized} also corresponds to a temporal projection, we proceed similarly and use $\projnodes$ nodes for the Gauß quadrature there.

\subsubsection*{Nonlinear systems}
To solve the nonlinear system~\eqref{eq:model-discrete-quadrature-localized}, we use Newton's method.
The derivatives required are computed automatically using \textsf{JAX}~\cite{bradbury18-jax} and the linear systems are solved using \textsf{Lineax}~\cite{rader23-lineax}.
In the first time step, the constant one vector is used as a starting value.
In subsequent steps, the numerical solution of the previous time step is used.
We perform ten Newton steps, which leads to convergence of the iteration for all our examples.

\subsubsection*{Temporal convergence}
We use the method of manufactured solutions to analyze the temporal convergence of our scheme.
Given a target solution $\zm = (\zm_1, \zm_2, \zm_3)$ and a fixed control signal $u$, we find $\manuin$ such that $\zm$ is the solution of
\begin{equation*}	\Big\langle
	\bigtextvec{
		\patz 1 \ham(z_1, z_2)
		\\
		\pat z_2
		\\
		0
	}
	,
	\phi
	\Big\rangle _{X^*,X}
	=
	\Big\langle
	(\jbf - \rbf)
	\bigtextvec{
		\pat z_1
		\\
		\patz 1 \ham(z_1, z_2)
		\\
		0
	}
	+
	\bbf(z) u
	+
	\manuin
	,
	\phi
	\Big\rangle _{X^*,X}\end{equation*}
and then use $\manuin$ as an additional control input to~\eqref{eq:model-discrete-quadrature-localized}.
We then visualize the relative error in the non-algebraic variables given by
\begin{equation}\label{eq:non-algebraic-state-error}
	\errorstate^{\text{non-alg}}(\tau)
	\coloneqq
	\frac{
		\max_{t \in [0,T]}\norm{(\zm_1^h(t), \zm_2^h(t)) - (z_1^\tauh(t), z_2^\tauh(t))}
	}{
		\max_{s \in [0,T]} \norm{(\zm_1^h(s), \zm_2^h(s))}
	},\end{equation}
the relative error in all variables given by
\begin{equation}\label{eq:full-state-error}
	\errorstate(\tau)
	\coloneqq
	\frac{
		\max_{t \in [0,T]}\norm{(\zm_1^h(t), \zm_2^h(t), \zm_3^\tauh(t)) - (z_1^\tauh(t), z_2^\tauh(t), z_3^\tauh(t))}
	}{
		\max_{s \in [0,T]} \norm{(\zm_1^h(s), \zm_2^h(s), \zm_3^h(s))}
	},\end{equation}
where in both cases the maximization is performed on a reference grid with mesh size $\tau_{\text{ref}} = 2^{-3} \tau_{\text{min}}$ with $\tau_{\text{min}}$ being the smallest tested mesh size,
and the nodal relative error in all variables given by
\begin{equation}\label{eq:nodal-state-error}
    \errorstatenodal(\tau)
    \coloneqq
    \frac{
        \max_{j=0,\dots,m}
        \norm{
            \zm^h(t_j) - z^\tauh(t_j)
        }_{X}
    }{
        \max_{s=0,\dots,m}
        \norm{
            \zm^h(t_s)
        }_{X}
    }.\end{equation}

\subsubsection*{Energy balance verification}
To verify the energy balance~\eqref{eq:energy-balance-discrete-localized}, we visualize the relative error
\begin{equation}\label{eq:relative-energy-balance-error}
	\errorenergy(t_j)
	\coloneqq
	\frac{
		\left|
		\ham((z_1^\tauh,z_2^\tauh)(t_{j})) - \ham((z_1^\tauh,z_2^\tauh)(t_{j-1}))
		-
		Q_j(
        -
		\langle \rbf (\zeta), \zeta \rangle
		+
		\langle \bbf(z^\tauh) u, \zeta \rangle
		)
		\right|
	}{
		\max_{s=1,\dots,m}
		|
		\ham((z_1^\tauh,z_2^\tauh)(t_{s})) - \ham((z_1^\tauh,z_2^\tauh)(t_{s-1}))
		|
	}\end{equation}

with $\zeta = (\pat z_{1}^\tauh, \proj_2^\tau \proj_2^h \patz 2 \ham(z_{1}^\tauh, z_{2}^\tauh), z_{3}^\tauh)$.

\subsection{Nonlinear circuit model}\label{sec:numerics:circuit}

We consider a ladder network as depicted in \Cref{fig:circuit}.

\begin{figure}
	\begin{circuitikz}[european]
		\draw[thick] (0,0) to[sinusoidal voltage source] (0,3) -- (3,3) to[C] (3,0) -- (0,0);
		\draw[thick] (3,3) to[R] (5,3) to[american inductor] (7,3) to[C] (7,0) -- (3,0);
		\draw[thick] (7,3) to[R] (9,3) to[american inductor] (11,3) to[R] (11,0) -- (7,0);
		\draw[thick] (3,3) -- (2,2.5) node[ground]{};
	\end{circuitikz}
	\caption{Ladder network for the example considered in \Cref{sec:numerics:circuit}.}
	\label{fig:circuit}
\end{figure}

Following the modified nodal analysis as in~\cite{GerHRS21}, the governing equations may be written as

\begin{equation}\label{eq:circuit}
    \begin{aligned}
    \Ac \dot{q}_\mathrm{C} + \AS \iS + \AR \Gmat \AR\tp \phi+\AL\nabla \HL(\psiL)  & = 0,\\
    -\AL\tp\phi+\dot{\psi}_{\mathrm{L}} &= 0,\\
    \Ac\tp \phi - \nabla \HC(\qC) & = 0, \\
    \AS\tp \phi - \uS & = 0,
    \end{aligned}\end{equation}

where $\qC\colon [0,T]\to\R^2$ contains the charges at the capacitors, $\iS,\uS\colon [0,T]\to\R$ the current and voltage at the voltage source, respectively, $\phi\colon [0,T]\to\R^5$ the vertex potentials, and $\psiL\colon [0,T]\to\R^2$ the magnetic fluxes at the inductors. $\HL,\HC\colon \R^2\to\R$ denote the total energies of the inductors and capacitors, respectively, and $\Gmat\in\R^{3,3}$ is a diagonal matrix containing the conductances, i.e., the reciprocals of the resistances. Moreover, $\Ac\in\R^{5,2}$, $\AS\in\R^{5,1}$, $\AR\in\R^{5,3}$, $\AL\in\R^{5,2}$ are the parts of the incidence matrix corresponding to the edges of the capacitors, voltage source, resistors, and inductors, respectively.
For the given circuit, these matrices are given by
\begin{equation*}	\Ac =
	\begin{bmatrix}		0 & 0\\
		0 & -1\\
		0 & 0\\
		0 & 0\\
		1 & 1\end{bmatrix}
	,\quad \AS =
	\begin{bmatrix}		0\\
		0\\
		0\\
		0\\
		-1\end{bmatrix}
	,\quad \AR =
	\begin{bmatrix}		1 & 0 & 0\\
		0 & -1 & 0\\
		0 & 1 & 0\\
		0 & 0 & -1\\
		0 & 0 & 1\end{bmatrix}
	,\quad \AL =
	\begin{bmatrix}		-1 & 0\\
		1 & 0\\
		0 & -1\\
		0 & 1\\
		0 & 0\end{bmatrix}
	.\end{equation*}
In the following, we set
\begin{equation*}	G = \eye_3,\quad
	\HL(\psiL) = \tfrac12\, (\norm{\psiL}^2 + \norm{\psiL}^4),\quad
	\HC(\qC) = \tfrac12 \norm{\qC}^2,\quad \uS(t)=\sin(t).\end{equation*}
The index of the system is two, cf.~\cite[Sec.~4]{SchMMV23}.
It can be shown that the initial values have to satisfy the consistency condition
\begin{equation*}	\begin{bmatrix}		0 & -1 & 0 & 0 & 0 & 0 & 0\\
		0 & 0 & 1 & -1 & 0 & 0 & 0\\
		0 & 0 & 0 & 0 & -1 & 1 & 0\\
		0 & 0 & -1 & 1 & 1 & -1 & 1\\
		0 & 0 & 0 & 0 & 0 & -1 & 0\\
		1 & 0 & 0 & 0 & 0 & -1 & 0\\
		0 & 0 & 1 & 0 & 0 & -1 & 0\\\end{bmatrix}
	\begin{bmatrix}		[\qC(0)]_1\\
		\phi(0)\\
		\iS(0)\end{bmatrix}
	=
	\begin{bmatrix}		-1 & 0 & 0\\
		0 & -1 & 0\\
		0 & 1 & 0\\
		1 & 0 & 0\\
		0 & 0 & 0\\
		0 & 0 & 0\\
		0 & 0 & -1\end{bmatrix}
	\begin{bmatrix}		\nabla \HL(\psiL)\\
		[\qC(0)]_2\end{bmatrix}
	-\begin{bmatrix}		0\\
		0\\
		0\\
		\dot{u}_{\mathrm{S}}(0)\\
		\uS(0)\\
		0\\
		0\end{bmatrix}
	.\end{equation*}
We set all initial values to $0$ except for $\iS(0)=-\dot{u}_{\mathrm{S}}(0)=-\cos(0)=-1$.
As shown in~\cite{AltS25}, system~\eqref{eq:circuit} can be written as
\begin{equation*}    \vec{
        \nabla \HC(\qC)
        \\
        \dot{\psi}_{\mathrm{L}}\\
        0
        \\
        0
    }
    = \underbrace{
    \vec{
        \phantom{-}0 & \phantom{-}0 & \phantom{-}0 & \Ac\tp \\
        \phantom{-}0 & \phantom{-}0 & \phantom{-}0 & \AL\tp \\
        \phantom{-}0 & \phantom{-}0 & \phantom{-}0 & \AS\tp \\
        -\Ac & -\AL & -\AS & -\AR \Gmat \AR\tp
    }}_{=\vcentcolon\Amat}
    \vec{
        \dot{q}_{\mathrm{C}}
        \\
        \nabla \HL(\psiL)\\
        \iS
        \\
        \phi
    }
    +\underbrace{
    \vec{0 \\ 0\\ -1 \\ 0}}_{=\vcentcolon\Bmat}\uS\end{equation*}
or, equivalently, in the form~\eqref{eq:model-strong} with $z_1 = \qC$, $z_2 = \psiL$, $z_3 = (\iS, \phi)$, $\jbf = \tfrac12 (\Amat - \Amat\tp)$, $\rbf = -\tfrac12 (\Amat + \Amat\tp)$, and $\ham(z_1,z_2) = \HC(\qC)+\HL(\psiL)$.

The number of quadrature nodes $\projnodes$ used in the temporal projection was chosen as $\projnodes = 2k$, where $k$ is the polynomial degree used in the temporal ansatz space.
This choice is motivated by $\nabla \ham_C$ being a third order polynomial so that the maximal polynomial degree of the projection terms is $3k + k-1 = 4k-1$, i.e.,~$3$ times degree $k$ plus the degree of the test function~$k-1$.
Furthermore, we used $\quadnodes = k+1$ for all of our experiments.

To analyze the temporal convergence of the scheme, we consider the manufactured solution 
and control signal 
\begin{gather*}    q_C(t) = \vec{\cos(t) \\ \cos(t)} \in \RR^2,
    \qquad
    \psi_L(t) = \vec{\cos(t) \\ \cos(t)} \in \RR^2,
    \qquad
    \imath_S(t) = \sin(t),
    \\
    \phi(t) = \vec{\sin(t) \\ \vdots \\ \sin(t)} \in \RR^5,
    \qquad
    u_S(t) = \sin(t) - 1.\end{gather*}
For the computation of the relative error in the energy balance and the visualization of the Hamiltonian, we chose the initial condition $z_0 = (q_{C,0}, \psi_{L,0}, \imath_{S,0}, \phi_0)$ and again the control signal $u_S(t) = \sin(t)$ with $q_{C,0} = \psi_{L,0} = 0$, $\imath_{S,0} = -1$ and $\phi_0 = 0$.

In \Cref{fig:nonlinear_circuit_convergence_all,fig:nonlinear_circuit_convergence_nonalg}, the convergence history in time is shown, once for all and once for the non-algebraic variables only.
In both experiments, several polynomial degrees $k$ with $\projnodes = 2k$ and $\quadnodes = k+1$ are used.
For odd polynomial degree~$k$, we observe the order of convergence $k+1$ for the non-algebraic variables $q_C$ and $\psi_L$ and order~$k$ for the algebraic variables $i_S$ and $\phi$.
For even polynomial degrees, the observed orders match that of the preceding odd degree.
\Cref{fig:nonlinear_circuit_energybalance} visualizes the relative error in the energy balance for several choices of $k$ for~$\tau = 10^{-2}$.
As expected, we observe that our scheme satisfies the energy balance up to machine precision in each time step.

\begin{figure}
    \centering
    \begin{subfigure}[t]{.32\textwidth}
        \centering
        \scalebox{0.39}{\input{figures/nonlinear_circuit_varying_degree.pgf}}
        \vspace{-6pt}
        \caption{all variables}
        \label{fig:nonlinear_circuit_convergence_all}
    \end{subfigure}
    \hfill
    \begin{subfigure}[t]{.32\textwidth}
        \centering
        \scalebox{0.39}{\input{figures/nonlinear_circuit_varying_degree_nonalg.pgf}}
        \vspace{-6pt}
        \caption{non-algebraic variables}
        \label{fig:nonlinear_circuit_convergence_nonalg}
    \end{subfigure}
    \hfill
    \begin{subfigure}[t]{.32\textwidth}
        \centering
        \scalebox{0.39}{\input{figures/nonlinear_circuit_energybalance.pgf}}
        \vspace{-6pt}
        \caption{relative error in energy balance}
        \label{fig:nonlinear_circuit_energybalance}
    \end{subfigure}
    \caption{Temporal convergence of \Cref{scheme:full-discretization} applied to the nonlinear circuit~\eqref{eq:circuit} (left, middle) and relative error in the energy balance with $\tau = 10^{-2}$ (right) for several polynomial degrees $k$ with $\projnodes=2k$ and $\quadnodes=k+1$.}
    \label{fig:nonlinear_circuit}
\end{figure}

\subsection{Cahn--Hilliard equation}\label{sec:numerics:CH}

The Cahn--Hilliard equations~\cite{CahH58,EllS86} model phase separation in binary fluids.
To state the model, let $\Omega \subseteq \R^2$ be a bounded Lipschitz domain with boundary $\partial \Omega$.
Then, the Cahn--Hilliard equation on $[0,T] \times \Omega$ reads as

\begin{subequations}\label{eq:cahn-hilliard-strong}
\begin{align}    \pat v - \sigma \Delta w
    &= 0,
    \\
    - \eps \Delta v + \eps^{-1} W^\prime(v)
    &= w.\end{align}\end{subequations}

Here, $v\colon [0,T] \times \Omega \to \R$ is the difference of the phase fractions of the two components, $w\colon [0,T] \times \Omega \to \R$ represents the chemical potential, $\eps$ denotes the so-called interaction length, $\sigma$ acts as a mobility parameter, and $W$ is an energy potential with typically two minima.
We augment~\eqref{eq:cahn-hilliard-strong} with the boundary conditions

\begin{equation}\label{eq:cahn-hilliard-boundary-conditions}
    \nabla w\tp \normal = u_1,
    \qquad
    \nabla v\tp \normal = u_2\end{equation}

for given functions $u_1, u_2 \in C([0,T], L^2(\partial \Omega))$ and outer unit normal $\normal$ along~$\partial \Omega$.
In contrast to~\cite{EggHS21}, we do not eliminate the chemical potential $w$ from~\eqref{eq:cahn-hilliard-strong} and leave the algebraic equation in our model. Note that the system corresponds to an index-1 system in the sense that a standard spatial discretization would yield a differential--algebraic equation of index~1.
Toward~\eqref{eq:model-weak}, let us define $X_1 \coloneqq X_3 \coloneqq H^1(\Omega) \embeds \lebesgue^2(\Omega) \eqcolon Z_1 \eqcolon Z_3$.
The system is associated with the energy
\begin{equation*}    \ham\colon X_1 \to \R,\quad
    v \mapsto \int_\Omega \frac\eps2\, \nabla v\tp \nabla v + \frac1\eps\, W(v) \dx,\end{equation*}
which is well defined on $H^1(\Omega) \embeds \lebesgue^4(\Omega)$ and satisfies
\begin{equation*}    \langle
        \nabla \ham(v)
        ,
        \phi
    \rangle _{X_1^*, X_1}
    =
    \int_\Omega
        \eps\, \nabla v\tp \nabla \phi
        +
        \eps^{-1} \phi W'(v)
    \dx\end{equation*}
for all $v,\phi \in X_1$.
After testing~\eqref{eq:cahn-hilliard-strong} with functions $\psi, \phi \in H^1(\Omega)$ and applying Green's first identity, we arrive at

\begin{align*}    \int_\Omega
        \psi \pat v
        +
        \sigma \nabla w\tp \nabla \psi
    \dx
    &
    =
    \int_{\partial \Omega}
        \sigma \psi u_1
    \ds,
    \\
    \int_\Omega
        \eps \nabla v\tp \nabla \phi
        +
        \eps^{-1} \phi W'(v)
    \dx
    &
    =
    \int_\Omega
        \phi w
    \dx
    +
    \int_{\partial \Omega}
        \eps \phi u_2
    \ds.\end{align*}

This formulation is to be understood pointwise in time.
Now, define $\jbf,\rbf \colon X \to X^*$ via

\begin{equation*}    \Big\langle
        \jbf \bigtextvec{\pat v \\ w}
        ,
        \bigtextvec{\phi \\ \psi}
    \Big\rangle _{X^*,X}
    =
    \int_\Omega
        \phi w - \psi \pat v
    \dx
    ,\qquad
    \Big\langle
        \rbf \bigtextvec{\pat v \\ w}
        ,
        \bigtextvec{\phi \\ \psi}
    \Big\rangle _{X^*,X}
    =
    \int_\Omega
        \sigma \nabla w\tp \nabla \psi
    \dx.\end{equation*}

With $u = (u_1, u_2)$ we further define $\bbf \in \boundedlinear(L^2(\partial \Omega) \times L^2(\partial \Omega), X^*)$ by

\begin{equation*}    \Big\langle
        \bbf u
        ,
        \bigtextvec{\phi \\ \psi}
    \Big\rangle _{X^*,X}
    =
    \int_{\partial \Omega}
        \sigma \psi u_1 + \eps \phi u_2
    \ds.\end{equation*}

Since $\jbf$ and $\rbf$ satisfy~\eqref{eq:J-R-properties}, we arrive with $z = (v, \bullet, w)$ at a system of the form~\eqref{eq:model-weak}.

For the space discretization of the Cahn--Hilliard model~\eqref{eq:cahn-hilliard-strong}, we focus on the unit square $\Omega = (0,1)^2$.
We use first-order Lagrange finite elements defined on a uniform triangulation of the domain to define the approximation spaces $X_i^h$.
To define the elements, we utilize the Python package \textsf{fenics-basix}~\cite{scroggs22-basix,scroggs22-construction}.
We use the constant mesh width $h=1/10$ for all of our experiments.

We consider the double well potential $W(v) = \frac14 (v^2 - 1)^2$ and homogeneous Neumann boundary conditions $u_1 = u_2 = 0$.
The number of quadrature nodes $\projnodes$ used in the temporal projection was chosen as $\projnodes = 2k$ per default, where $k$ is the polynomial degree used in the temporal ansatz space.
This choice is motivated by $W'$ being a third order polynomial so that the maximal polynomial degree of the projection terms is $3k + k-1 = 4k-1$.
Furthermore, we observed that the choice $\quadnodes = 2k$ leads to better convergence of Newton's method in the time stepping scheme, which is why we used this setting for all of our experiments.
For the initial condition of~\eqref{eq:cahn-hilliard-strong}, we initialize $v$ as fractal noise projected onto the spatial ansatz space and then compute $w$ satisfying the second equation in~\eqref{eq:cahn-hilliard-strong}.
To obtain the matrices $\Vmat_1\in \R^{n_1, \nr_1}$ and $\Vmat_3 \in \R^{n_3, \nr_3}$ for the ROM as in \Cref{subsec:model-reduction}, we use proper orthogonal decomposition~\cite{sirovich87-turbulence,holmes12-turbulence}.
For this, we simulate the FOM with step size $\tau = 10^{-2}$ and choose the reduced order dimensions as $\nr_1 = \nr_3 = 5$.

In \Cref{fig:cahn_hilliard_state}, the difference of the phase fractions $v$ is shown at various time points for the full and reduced order models.
For the temporal discretization, we used piecewise polynomials with degree $k=3$ for $z_1$ and $z_3$ and $\quadnodes = \projnodes = 6$ and $\tau = 10^{-2}$.
We observe that our scheme successfully reproduces the key characteristics of the model.
The corresponding space discrete and reduced order Hamiltonians are shown in \Cref{fig:cahn_hilliard_hamiltonian}.
We observe that the behavior of the reduced order Hamiltonian $\hamr$ is similar to the one of the full order Hamiltonian $\ham^h$.
\Cref{fig:cahn_hilliard_energybalance} depicts $\errorenergy$ 
for the spatially discrete FOM and ROM.
We observe that the energy balance is satisfied up to machine precision in each time step if the appropriate number of quadrature nodes $\projnodes$ is used for computing the projection $\proj^\tau_2$.
As outlined before, for our choice of the potential $W$ the appropriate choice is $\projnodes = 2k$.
Reducing $\projnodes$ drastically increases the error $\errorenergy$, which stems from the first component of the left-hand side of~\eqref{eq:model-discrete-quadrature-localized}.
\Cref{fig:cahn_hilliard_convergence} shows the temporal convergence of the time stepping scheme~\eqref{eq:model-discrete-quadrature-localized}.
For the manufactured solution, we choose $\zm_1 = v$ as
\begin{equation*}    \zm_1(t,(x_1,x_2))
    \coloneqq \big(\tfrac{1}{10} \sin(t) + 1\big) \Big(2 \exp\big(-25\,(x_1-\tfrac12)^2 - 25\,(x_2-\tfrac12)^2\big) - 1\Big)\end{equation*}
and determine $\zm_3=w$ according to the algebraic equation.
For the computation of the error~\eqref{eq:non-algebraic-state-error}, we then project both $\zm_1$ and $\zm_3$ to the ansatz space, so that the space discretization error does not influence the convergence.

We observe that our scheme has order of convergence~$k$ in all variables and~$k+1$ in the non-algebraic variables.
When the error in the algebraic variable~$w$ is calculated by comparing to the temporal projection of the manufactured solution onto the piecewise polynomials of degree at most~$k-1$, the convergence order~$k+1$ can be observed also for this error.
This is in accordance to the results of~\cite{brunk23-stability}.

\begin{figure}
    \centering
    \begin{subfigure}[t]{.32\textwidth}
        \centering
        \scalebox{0.38}{\input{figures/cahn_hilliard_0.pgf}}
        \caption{$t=0$, full order model}
        \label{fig:cahn_hilliard_state_0}
    \end{subfigure}
    \hfill
    \begin{subfigure}[t]{.32\textwidth}
        \centering
        \scalebox{0.38}{\input{figures/cahn_hilliard_2.pgf}}
        \caption{$t=0.75$, full order model}
        \label{fig:cahn_hilliard_state_2}
    \end{subfigure}
    \hfill
    \begin{subfigure}[t]{.32\textwidth}
        \centering
        \scalebox{0.38}{\input{figures/cahn_hilliard_4.pgf}}
        \caption{$t=1.5$, full order model}
        \label{fig:cahn_hilliard_state_4}
    \end{subfigure}
    \\
    \begin{subfigure}[t]{.32\textwidth}
        \centering
        \scalebox{0.38}{\input{figures/cahn_hilliard_rom_0.pgf}}
        \caption{$t=0$, reduced order model}
        \label{fig:cahn_hilliard_rom_state_0}
    \end{subfigure}
    \hfill
    \begin{subfigure}[t]{.32\textwidth}
        \centering
        \scalebox{0.38}{\input{figures/cahn_hilliard_rom_2.pgf}}
        \caption{$t=0.75$, reduced order model}
        \label{fig:cahn_hilliard_rom_state_2}
    \end{subfigure}
    \hfill
    \begin{subfigure}[t]{.32\textwidth}
        \centering
        \scalebox{0.38}{\input{figures/cahn_hilliard_rom_4.pgf}}
        \caption{$t=1.5$, reduced order model}
        \label{fig:cahn_hilliard_rom_state_4}
    \end{subfigure}
    \caption{Difference of the phase fractions $v$ for the full and reduced order models of the Cahn--Hilliard equation~\eqref{eq:cahn-hilliard-strong} for several time points (top: full order model, bottom: reduced order model).
        In both cases, we use $k=3$, $\quadnodes = \projnodes = 6$, and $\tau=10^{-2}$ for the time discretization.
        The full order model was obtained with space mesh width $h=1/10$.}
    \label{fig:cahn_hilliard_state}
\end{figure}

\begin{figure}
    \centering

    \scalebox{0.55}{\input{figures/cahn_hilliard_energybalance.pgf}}

    \caption{Relative error in the energy balance for the full (left) and reduced order models (right) of the Cahn--Hilliard equation~\eqref{eq:cahn-hilliard-strong} for several $k$, $\projnodes$ and fixed $\quadnodes=2k$.
        In all cases, we use $\tau=10^{-2}$ for the time discretization.
        The full order model was obtained with mesh width $h=1/10$.}
    \label{fig:cahn_hilliard_energybalance}
\end{figure}

\begin{figure}
    \centering
    \begin{subfigure}[t]{.32\textwidth}
        \centering
        \scalebox{0.39}{\input{figures/cahn_hilliard_hamiltonian.pgf}}
        \vspace{-6pt}
        \caption{Space-discrete and reduced order energies $\ham^h$ and $\hamr$ for $k=3$ and $\quadnodes = \projnodes = 6$.}
        \label{fig:cahn_hilliard_hamiltonian}
    \end{subfigure}
    \hfill
    \begin{subfigure}[t]{.32\textwidth}
        \centering
        \scalebox{0.39}{\input{figures/cahn_hilliard_varying_degree.pgf}}
        \vspace{-6pt}
        \caption{Temporal convergence in all state variables for several polynomial degrees $k$ with $\quadnodes=\projnodes=2k$.}
        \label{fig:cahn_hilliard_convergence_all}
    \end{subfigure}
    \hfill
    \begin{subfigure}[t]{.32\textwidth}
        \centering
        \scalebox{0.39}{\input{figures/cahn_hilliard_varying_degree_nonalg.pgf}}
        \vspace{-6pt}
        \caption{Temporal convergence in the non-algebraic states for several polynomial degrees $k$ with $\quadnodes=\projnodes=2k$.}
        \label{fig:cahn_hilliard_convergence}
    \end{subfigure}
    \caption{Energy (left) and convergence (middle, right) for the Cahn--Hilliard equation~\eqref{eq:cahn-hilliard-strong} for spatial mesh width $h=1/10$.}
    \label{fig:cahn_hilliard_hamiltonian_and_convergence}
\end{figure}

\subsection{Doubly nonlinear parabolic equation}
As in~\cite{GieKT25}, let us consider a doubly nonlinear parabolic equation, which leads to a model of the form~\eqref{eq:model-weak} with $z_1 = z_3 = \bullet$.
In contrast to~\cite{GieKT25}, where numerically only a special case with vanishing projection was investigated, here we also consider cases in which the projections in \Cref{scheme:full-discretization} do not vanish.
To simplify the presentation, we focus on a one-dimensional spatial domain and neglect boundary controls.
To see how to formulate the more general case in our framework, the interested reader is referred to~\cite{GieKT25}.
Let $\Omega = (0, 1) \subseteq \RR$ and set~$Q \coloneqq (0,T) \times \Omega$ for $T > 0$.
We assume~$p > 1$, $q \geq 1$ and that~$\dnpa, \dnpb \colon \RR \to \RR$ are given by
\begin{equation*}    \dnpa(z) = \abs{z}^{q-1} z, \qquad
    \dnpb(w) = \abs{w}^{p-2} w.\end{equation*}
On $Q$ we consider the nonlinear scalar evolution equation
\begin{align}\label{eq:doubly-parabolic}
    \pat z = \pax \dnpb \big( \pax \dnpa(z) \big)\end{align}
augmented with the initial data~$z(0,x) = z_0(x)$ and the boundary conditions
\begin{equation*}    \dnpb\big(\pax \dnpa(z(t,0))\big) = 0, \qquad
    \dnpb\big(\pax \dnpa(z(t,1))\big) = 0.\end{equation*}
For~$q = 1$, our problem reduces to the \emph{$p$-Laplace equation}~\cite{heinonen06-nonlinear} and for~$p = 2$ it reduces to the \emph{porous medium equation}~\cite{vazquez07-porous}.
To write~\eqref{eq:doubly-parabolic} in the form~\eqref{eq:model-weak}, we consider~$\phi \in C^{\infty}(\overline Q)$, integrate by parts, and employ the boundary conditions to arrive at
\begin{align*}    \int_{\Omega}
        \pax \dnpb \big( \pax \dnpa(z) \big)
        \,
        \phi
    \dx
    & =
    -
    \int_{\Omega}
        \abs{\pax \dnpa(z)}^{p-2} \pax \dnpa(z)
        \,
        \pax \phi
    \dx.\end{align*}
Since~$p\geq \frac{2d}{d+1} > \frac{2d}{d+2}$, there is a continuous and dense embedding~\cite[Thm.~4.12]{adams75-sobolev} $X \coloneqq W^{1,p}(\Omega) \hookrightarrow \lebesgue^2(\Omega) \eqqcolon Z$.
System~\eqref{eq:doubly-parabolic} is associated with the energy functional
\begin{align*}    \mathcal{H}(z) \coloneqq  \tfrac{1}{q+1} \int_{\Omega} \abs{z}^{q+1} \dx,\end{align*}
which is well-defined for any~$z\in D\coloneqq \tilde{X}\coloneqq \lebesgue^{q+1}(\Omega) \cap X$ and has the continuous Fréchet derivative
\begin{align*}    \mathcal{H'}(z) = \dnpa(z) = \abs{z}^{q-1}z\end{align*}
arising from the porous medium part of the equation.

We consider $z_1 = z_3 = \bullet$ and $\bbf \coloneqq 0$ (there are no inputs in our model). Further we define~$\jbf, \rbf \colon X \to X^*$ as $\jbf \coloneqq 0$ and
\begin{align*}    \langle
        \rbf(v)
        ,
        \phi
    \rangle_{X^*,X}
    & \coloneqq
    \int_{\Omega}
        \abs{\pax v}^{p-2} \pax v
        \,
        \pax \phi
    \dx\end{align*}
for all~$v,\phi \in X$.

Note that the nonlinearity of~$\rbf$ stems from the~$p$-Laplace structure and that~$\rbf$ satisfies~\eqref{eq:J-R-properties}.
To see how inputs can be incorporated, see~\cite{GieKT25}.

For our numerical experiments, we consider $T = 0.1$, since the system quickly reaches an equilibrium.
Moreover, we consider the parameters~$p = 1.5$ and~$q \in \{1.5, 3\}$.

Note that neither of the projections in \Cref{scheme:full-discretization} vanish if~$p\neq 2$ and~$q \neq 1$.
We refer to~\cite{Kar26} for other choices of parameters and a more detailed discussion of the influence of the spatial discretization parameter.

For the space discretization, we use globally continuous, piecewise linear functions, i.e., hat functions~\cite[Ch.~3]{quarteroni94-numerical}.
Since the temporal projection is exact for~$p=2$ and~$q=3$ if~$\projnodes = 2k$ is used, we default to this choice for all of our experiments.
Furthermore, we use~$\quadnodes = 2k$ to improve the convergence of Newton's method in the time stepping scheme.

For the convergence experiments, we use the manufactured solution
\begin{equation*}    \zm(t,x) \coloneqq e^{-\dnptimescale t} \cos \left( \tfrac{4 \pi x}{L} \right),\end{equation*}
where~$\dnptimescale \coloneqq 50$ is a time scaling parameter.
In all other experiments, we use~$z_0(x) \coloneqq \zm(0, x) = \cos \left( \tfrac{4 \pi x}{L} \right)$ as initial condition.

The structure-preserving properties of our scheme are illustrated in \Cref{fig:doubly_nonlinear_parabolic_energybalance_fom} where the results of the energy balance experiment are shown for~$\tau = 2 \cdot 10^{-4}$ and various choices of~$k$.
It can be observed that the energy balance~\eqref{eq:model-discrete-quadrature-localized} is satisfied up to machine precision for the majority of the time horizon.
The relative error $\errorenergy$ is largest at the start of the time horizon for some parameter settings.
This is due to the Newton solver used in the time stepping scheme, see~\cite{slodicka02-robust} for notes on more specialized schemes for doubly nonlinear parabolic equations.

In \Cref{fig:doubly_nonlinear_parabolic_convergence}, the results of our convergence experiments are shown for the spatial discretization parameter~$h=\tfrac{1}{25}$.

For~$k=2$ and~$k=3$, we observe order of convergence $k+1$ until the relative error stagnates at around $10^{-8}$, while for~$k=4$ no convergence can be observed.
Similar observations can be made for the nodal superconvergence{, for which we refer to~\cite{Kar26}.}
In \Cref{fig:doubly_nonlinear_parabolic_convergence_no_projection}, the convergence behavior of \Cref{scheme:full-discretization} is shown when $\proj_2^\tau$ is omitted.
While the general convergence behavior seems unaltered, we observe that the error stagnates at a slightly lower level.
The difference is the most apparent for the highest considered choice~$k=4$.

\begin{figure}
    \centering
    \begin{subfigure}[t]{.48\textwidth}
        \centering
        \scalebox{0.55}{\input{figures/doubly_nonlinear_parabolic_p1.5_q1.5_nx50_energybalance.pgf}}
        \vspace{4pt}
        \caption{$p=1.5$, $q=1.5$}
    \end{subfigure}
    \hfill
    \begin{subfigure}[t]{.48\textwidth}
        \centering
        \scalebox{0.55}{\input{figures/doubly_nonlinear_parabolic_p1.5_q3_nx50_energybalance.pgf}}
        \vspace{4pt}
        \caption{$p=1.5$, $q=3$}
    \end{subfigure}
    \caption{Relative error in the energy balance for the full-order model of the doubly nonlinear parabolic equation~\eqref{eq:doubly-parabolic} for~$p=1.5$,~$q \in \{1.5, 3\}$ and various choices of~$k$ and~$\projnodes$.
        For the spatial discretization, we use~$h=\tfrac{1}{50}$.}
    \label{fig:doubly_nonlinear_parabolic_energybalance_fom}
\end{figure}

\begin{figure}
    \centering
    \begin{subfigure}[t]{.48\textwidth}
        \centering
        \scalebox{0.55}{\input{figures/doubly_nonlinear_parabolic_p1.5_q1.5_nx25_varying_degree.pgf}}
        \vspace{4pt}
        \caption{$p=1.5$, $q=1.5$}

    \end{subfigure}
    \hfill
    \begin{subfigure}[t]{.48\textwidth}
        \centering
        \scalebox{0.55}{\input{figures/doubly_nonlinear_parabolic_p1.5_q3_nx25_varying_degree.pgf}}
        \vspace{4pt}
        \caption{$p=1.5$, $q=3$}
    \end{subfigure}
    \caption{Convergence for the doubly nonlinear parabolic equation~\eqref{eq:doubly-parabolic} for~$p=1.5$ and~$q\in\{1.5, 3\}$.
        For the spatial discretization, we use~$h=\tfrac{1}{25}$.}
    \label{fig:doubly_nonlinear_parabolic_convergence}
\end{figure}

\begin{figure}
    \centering
    \begin{subfigure}[t]{.48\textwidth}
        \centering
        \scalebox{0.55}{\input{figures/doubly_nonlinear_parabolic_p1.5_q1.5_nx25_varying_degree_no_projection.pgf}}
        \vspace{4pt}
        \caption{$p=1.5$, $q=1.5$}

    \end{subfigure}
    \hfill
    \begin{subfigure}[t]{.48\textwidth}
        \centering
        \scalebox{0.55}{\input{figures/doubly_nonlinear_parabolic_p1.5_q3_nx25_varying_degree_no_projection.pgf}}
        \vspace{4pt}
        \caption{$p=1.5$, $q=3$}
    \end{subfigure}
    \caption{Convergence without the projective correction $\proj_2^\tau$ for the doubly nonlinear parabolic equation~\eqref{eq:doubly-parabolic} for~$p=1.5$ and~$q\in\{1.5, 3\}$.
        For the spatial discretization, we use~$h=\tfrac{1}{25}$.}
    \label{fig:doubly_nonlinear_parabolic_convergence_no_projection}
\end{figure}

\section{Conclusions}\label{sec:conclusion}

This paper proposes a discretization scheme for a class of energy based models.
Among others, the considered model class encompasses gradient systems and port-Hamiltonian systems and allows for additional algebraic constraints.
The presented discretization scheme is based on a Petrov--Galerkin approach using piecewise polynomials in time.
It is suitable for spatial and temporal discretization as well as model order reduction and qualitatively preserves the energy balance of the continuous or full order models.
In numerical experiments with a nonlinear circuit model, the Cahn--Hilliard equation, and a doubly nonlinear parabolic equation, we observe that our scheme has arbitrary order of convergence and that solutions satisfy the proposed energy balance up to machine precision in all time steps.

Topics for future research include studying the well-posedness of the scheme as well as providing proofs for the observed orders of convergence.
Further, in view of model order reduction, extending our ideas to nonlinear ansatz spaces and investigating hyperreduction techniques required for efficient online simulation of the reduced order model are interesting open research endeavors.

\section*{Code availability}
All custom code used to generate the results reported in this paper is available at \url{https://github.com/akarsai/structured-discretization-energy-based-models}.

\section*{Acknowledgments}
The authors thank T.~Reis for providing the matrices used for an older version of the nonlinear circuit example.
Furthermore, RA acknowledges support by the Deutsche For\-schungs\-ge\-mein\-schaft (DFG, German Research Foundation) - 446856041.
Moreover, AK thanks the Deutsche For\-schungs\-ge\-mein\-schaft for their support within the subproject B03 in the Son\-der\-for\-schungs\-be\-reich/Trans\-re\-gio 154 ``Mathematical Modelling, Simulation and Optimization using the Example of Gas Networks'' (Project 239904186).

\bibliographystyle{alpha}
\bibliography{references}

\end{document}